\journal{Expositiones Mathematicae}
\newtheorem{theorem}{Theorem}[section]
\newtheorem{corollary}[theorem]{Corollary}
\newtheorem{lemma}[theorem]{Lemma}
\newtheorem{proposition}[theorem]{Proposition}
\theoremstyle{definition}
\def\N{{\mathbb N}}
\def\F{{\mathbb F}}
\def\Z{{\mathbb Z}}
\def\R{{\mathbb R}}
\def\I{\mathbb {I}}
\def\scA{\F\monoid}
\def\scK{\mathcal{K}}
\def\genset{\mathcal{X}}
\def\monoid{\left<\genset\right>}
\def\lbrack{\left[}
\def\rbrack{\right]}
\def\freeLie{\mathcal{L}}
\def\ad{{\rm ad}\ }
\def\freeF{\F\monoid}
\def\Heisen{\mathcal{H}(q)}
\def\nqcombi{\{ n \}_q}
\def\A1{\alpha}
\def\B1{\beta}
\def\g{\gamma}
\def\D1{\delta}
\def\zeta{\zeta}
\def\E1{\eta}
\def\T1{\theta}
\def\Io1{\iota}
\def\L1{\lambda}
\def\rq{\mathcal{R}(q)}
\def\R1{\rho}
\def\T2{\tau}
\def\S1{\sigma}
\def\LieAB{\lbrack A,B\rbrack}
\def\algI{I}
\def\ps@pprintTitle{%
 \let\@oddhead\@empty
 \let\@evenhead\@empty
 \def\@oddfoot{}%
 \let\@evenfoot\@oddfoot}
\begin{document}

\begin{frontmatter}

\title{An extension of a $q$-deformed Heisenberg algebra\\ and its Lie polynomials}

%% or include affiliations in footnotes:

\author[mymainaddress]{Rafael Reno S. Cantuba\corref{mycorrespondingauthor}}
\cortext[mycorrespondingauthor]{Corresponding author}
\ead{rafael\_cantuba@dlsu.edu.ph}

\author[mymainaddress]{Mark Anthony C. Merciales}
\ead{mark\_anthony\_merciales@dlsu.edu.ph}

\address[mymainaddress]{Mathematics and Statistics Department,
De La Salle University,
Manila, Philippines}

\begin{abstract}
\quad \ Let $\mathbb{F}$ be a  field, and  fix a  $q\in\mathbb{F}$. The $q$-deformed Heisenberg algebra $\mathcal{H}(q)$ is the unital associative algebra over $\mathbb{F}$ with  generators $A$, $B$ and a  relation which asserts that $AB - qBA$ is the multiplicative  identity in  $\mathcal{H}(q)$. We extend $\mathcal{H}(q)$ into an algebra $\mathcal{R}(q)$ defined  by  generators  $A$, $B$  and  a relation  which  asserts that  $AB-qBA$ is central in $\mathcal{R}(q)$. We identify all elements of $\mathcal{R}(q)$ that are Lie polynomials in $A$, $B$. 
\end{abstract}

\begin{keyword}
\texttt{$q$-deformed Heisenberg algebra}\sep \texttt{diamond lemma}\sep \\ \texttt{Lie polynomial}\sep \texttt{central extension}
\MSC[2010] 17B60\sep  16S15\sep 47C99\sep 47L15
\end{keyword}

\end{frontmatter}

%\linenumbers

\section{Introduction}\label{intro}
\quad \ By the $q$-deformed Heisenberg algebras $\Heisen$, we refer to the parametric family of unital associative algebras with two generators $A, B$ that satisfy the commutation relation $AB-qBA =I$ where $I$ is the multiplicative identity in $\Heisen$  \cite{Hel05}.
The $q$-deformed Heisenberg algebras have received much attention with regard to their representations and structure as pointed out in \cite{Hel} since even the case $q=1$, which is also called the undeformed case, is already an object of interest with numerous applications \cite{Hel05}. Moreover, a $q$-deformed Heisenberg algebra is an algebraic formalism of the commutation relation obeyed by the creation and annihilation operators in $q$-oscillators \cite{Bie89,Burb,Gaa54a,Gaa54b,Klim,Mac89}.

We consider an extension of $\Heisen$ denoted by $\rq$ which is an associative algebra with generators $A, B$ and relation which asserts that $AB-qBA$ is central in $\rq$. The manner by which we extend the algebra $\Heisen$ into $\rq$ is similar to that done in \cite{UAW} in which an Askey-Wilson algebra was extended into the universal Askey-Wilson algebra. We investigate the algebraic structure of $\rq$ in terms of some essential associative algebra properties, and characterize the Lie subalgebra of $\rq$ generated  by $A \ \mbox{and} \ B$. 

Studies have also been made about similar Lie polynomial characterization problems \cite{Can, Can2, Can3, Can4, Can5}. The Lie polynomial characterization problem for a $q$-deformed Heisenberg algebra was solved in \cite{Can2}, and this was extended to a bigger class of algebras in \cite{Can5}. This work is a further generalization of the solution to the Lie polynomial characterization problem in \cite{Can5} since $\rq$ represents a bigger class of algebras that includes those considered in \cite{Can5}. 

 We give other presentations for $\rq$ with some interesting properties that are helpful in elucidating further properties of the algebra that relate to reduction systems, normal forms, and Lie polynomials. More specifically, we investigate the structure of $\rq$ by first determining a basis using the \emph{Diamond Lemma for Ring Theory} [1,Theorem 1.2]. We use this basis to further understand the structure of the Lie subalgebra $\freeLie$ of $\rq$ generated  by $A \mbox{ and } B$. 

The algebra $\rq$ is formally defined as an extension of a $q$-deformed Heisenberg algebra in Section~\ref{sec3}, where we also introduce a new generator $\g:=AB -qBA$ resulting to another presentation for $\rq$ with interesting properties.  By introducing a new generator, in addition to the fact that we obtain a relatively better presentation for $\rq$, computations involving its elements become more manageable, and gives a better setting for determining a basis for $\rq$ using the Diamond Lemma. In Section~\ref{sec4}, we give $\rq$ another presentation with four generators after introducing another generator $C:=AB-BA$. Also, we determine a new basis for $\rq$ then investigate some of its properties involving the Lie bracket operation. Lastly, we present in Section~\ref{sec5} the results related to the Lie subalgebra $\freeLie$ of $\rq$ generated by $A$ and $B$, or the set of all Lie polynomials in $A, \, B$. Moreover, taking the quotient of $\rq$ modulo the relation $\g=\algI$ yields the solution to the Lie polynomial characterization problem for $\Heisen$ that was solved in \cite{Can2}, which now is a specific case of our solution for the Lie polynomial characterization problem in $\rq$ that we show in this work. Similarly, the results in \cite{Can5} is the specific case of our solution in this paper if we take the quotient of $\rq$ modulo the relation $\g=b\algI$ for some scalar $b$.
%%%%%%%%%%%%%%%%%%%%%%%%%%%%%%%%%%%%%%%%%%%%%%%%%%%%%%%%%%%%%%%%%%%%%%%%
\section{Preliminaries}\label{prelim}
\quad \ \ Let $\F$ be a field. An \emph{associative algebra} over $\F$, or an \emph{$\F$-algebra}, or simply an \emph{algebra} over $\F$, is a vector space $\mathcal{A}$ over $\F$ together with a bilinear vector multiplication operation $\scA \times\scA \rightarrow \scA$ given by $(U,V)\mapsto UV$ such that $\scA$ is a ring with respect to vector addition and vector multiplication. If an associative algebra $\scA$ has an identity element under vector multiplication, then we say that $\scA$ is \emph{unital}.

Denote the set of all nonnegative integers by $\N$, and the set of all positive integers by $\Z^+$. If $n \in \N$, let $\genset$ denote an $n$-element set. We refer to the elements of $\genset$  as \emph{letters} or \emph{generators}.

Given $t \in \N$, by a \emph{word of length t} on $\genset$ we mean a sequence of the form 
\begin{equation}
    	X_1X_2 \cdots X_t
\end{equation}
where $X_i \in \genset$ for 1 $\le i \le t$. The word of length 0 or the \emph{empty word} is denoted by \textit{I}. Let $\monoid$ be the set of all words on $\genset$. Given $W \in \monoid$, we denote the length of $W$ by $|W|$.
For a word $W$=$X_1X_2 \cdots X_{|W|}$ on $\genset$, a sequence of the form \begin{equation}
    	X_sX_{s+1} \cdots X_t
\end{equation}
where $s,t \in \Z^+$ and $s\le t \le |W|$ is a \emph{subword} of $W$.
Given words $X_1X_2 \cdots X_s$ and $Y_1Y_2 \cdots Y_t$, their \emph{concatenation product} is given by 
\begin{equation}
    X_1X_2 \cdots X_sY_1Y_2 \cdots Y_t.
\end{equation}

 We now recall the \emph{free unital associative algebra} over $\F$ generated by $\genset$ which we denote by $\freeF$. The associative algebra $\freeF$ has basis $\monoid$. Multiplication in $\freeF$ is determined by the concatenation product. 

Denote the elements of $\genset$ by $X_1, \, X_2, \, \ldots, \,  X_n$ and let $L_1,  \, R_1, \, L_2, \, R_2, \, \ldots, \\ \, L_m, \, R_m \in \freeF$. Let $\I$ be the ideal of $\freeF$ generated by $L_1 - R_1, \, L_2 - R_2, \, \ldots, \, L_m - R_m$. 
The associative algebra with generators $X_1  , X_2 , \ldots , X_n$ and relations $L_1 = R_1, L_2 = R_2, \ldots , L_m = R_m$
is the quotient algebra $\freeF/\I$. 

We define a new operation in $\scA$ which is
\begin{eqnarray} \label{relaf2.5}
\nonumber [X, Y]&:=&XY-YX, \quad  \quad \forall \, X, \, Y \in \scA.
\end{eqnarray}
Clearly, the operation $\lbrack\cdot,\cdot\rbrack:\F\monoid\times\F\monoid\rightarrow\F\monoid$ has the property
\begin{eqnarray} \label{relaf2.5}
\nonumber [X, X]&=&0, \quad  \quad \forall \, X\in \scA.
\end{eqnarray}
Also, $\lbrack\cdot,\cdot\rbrack$ is bilinear, and is  \emph{skew-symmetric}, that is, 
\begin{eqnarray}
\nonumber [X,Y]&=&-[Y,X],\quad \ \quad  \quad   \quad \forall \, X, \, Y \in \scA,
\end{eqnarray}
and it also satisfies the \emph{Jacobi identity}
\begin{eqnarray}
\nonumber [X,[Y,Z]]+[Y,[Z,X]]+[Z,[X,Y]] &=&0,\quad \ \quad  \quad   \quad \forall \, X, \, Y\,Z \in \scA.
\end{eqnarray}
By these properties,  $\scA$ is a Lie algebra with $\lbrack\cdot,\cdot\rbrack$ as the Lie bracket.
%%%%%%%%%%%%%%%%%%%%%%%%%%%%%%%%%%%%%%%%%%%%%%%%%%
\section{The algebra $\rq$}\label{sec3}
\quad \ Throughout, we fix a $q \in\F$. 
Let $\genset_1 = \{A, \, B \}$, and denote the multiplicative identity in $\F\monoid$ by $\algI$. Let $\I_1(q)$ be the two-sided ideal of $\F\left<\genset_1\right>$ generated by $AB-qBA-I$. The $q$-deformed Heisenberg algebra or the associative algebra $\Heisen$ with generators $A, B$ and relation $AB-qBA=I$ is the quotient algebra $\F\left<\genset_1\right>/\I_1(q)$.
We extend $\Heisen$ to an associative algebra $\rq$ defined by generators $A,B$ and a relation which asserts that $AB-qBA$ is central in $\rq$. This means that the relations
\begin{eqnarray}
  A(AB-qBA)&=&(AB-qBA)A, \mbox{ and } \label{eq2} \\
  B(AB-qBA)&=&(AB-qBA)B,  \label{eq3}
\end{eqnarray}
are sufficient to define $\rq$.

By introducing a new letter $\g$, the algebra $\rq$ would have the following presentation.
\begin{lemma}\label{lemfi3.1}
Let $\g:=AB-qBA$. The algebra $\rq$ has a presentation by generators $A, \, B, \, \g$ and relations
\begin{eqnarray}
   \g &=& AB-qBA, \label{eq4}  \\ 
    \g A &=& A\g, \label{eq5} \\ 
    \g B &=& B\g. \label{eq6}
\end{eqnarray}
\end{lemma}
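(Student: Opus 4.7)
The plan is to prove the lemma by constructing a pair of mutually inverse $\F$-algebra homomorphisms between $\rq$ (as originally defined by the two generators $A,B$ and the centrality relations \eqref{eq2}--\eqref{eq3}) and the algebra $\rq'$ with the three generators $A,B,\g$ and the relations \eqref{eq4}--\eqref{eq6}. This is the standard universal-property argument for recognizing that introducing a new symbol abbreviating a fixed element of an algebra gives an isomorphic presentation, provided the imposed relations on the new symbol are consequences of the corresponding relations on the element it abbreviates.

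First, I would define $\phi \colon \rq \to \rq'$ by sending $A \mapsto A$ and $B \mapsto B$. To see that $\phi$ is well defined, I would lift it to a homomorphism $\F\langle\genset_1\rangle \to \rq'$ and check that both defining relations \eqref{eq2} and \eqref{eq3} of $\rq$ land in the kernel. This is immediate from \eqref{eq4}--\eqref{eq6}: inside $\rq'$ we have $A(AB-qBA) = A\g = \g A = (AB-qBA)A$, and similarly for $B$.

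Next, I would define $\psi \colon \rq' \to \rq$ by sending $A \mapsto A$, $B \mapsto B$, and $\g \mapsto AB-qBA$. Lifting this to a homomorphism out of $\F\langle A,B,\g\rangle$, one checks that the three defining relations for $\rq'$ all map to zero: the relation $\g - (AB-qBA)$ becomes $0$ trivially, while the centrality relations $\g A - A\g$ and $\g B - B\g$ become \eqref{eq2} and \eqref{eq3} respectively, which hold in $\rq$ by definition.

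Finally, I would verify that $\phi$ and $\psi$ are mutually inverse by evaluating on generators: $\psi\circ\phi$ fixes $A$ and $B$, and $\phi\circ\psi$ fixes $A, B$ and sends $\g$ to $AB-qBA = \g$ in $\rq'$. Since each composite is the identity on a generating set, it is the identity on the whole algebra, and the lemma follows. I do not expect any real obstacle here; the content of the lemma is essentially the observation that the three-generator presentation is a conservative extension of the two-generator one, and the verification is routine once the two candidate maps are written down.
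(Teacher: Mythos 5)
Your proposal is correct and is essentially the paper's argument made precise: the paper's proof consists of the same two observations (the new relations follow from the definition of $\g$ and the centrality of $AB-qBA$, and conversely \eqref{eq2}--\eqref{eq3} are recovered by substituting \eqref{eq4} into \eqref{eq5}--\eqref{eq6}), merely stated informally rather than packaged as a pair of mutually inverse homomorphisms. No substantive difference in approach.
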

\begin{proof}
 Notice that relation \eqref{eq4} is precisely how we defined $\g$ while \eqref{eq5} and \eqref{eq6} follow from the assertion that $AB-qBA$  is central in $\rq$.
 
 In \eqref{eq5} and \eqref{eq6}, we use \eqref{eq4} to recover \eqref{eq2} and \eqref{eq3}. 
\end{proof}

We use the presentation for $\rq$ given in Lemma~\ref{lemfi3.1} above to determine a basis for $\rq$ using the Diamond Lemma. 
 \begin{theorem}
The following elements form a basis for $\rq$.
\begin{eqnarray}\g^{h} B^{m}  A^{n} , \quad (h, \, m, \, n  \in \N). \label{eq7}
\end{eqnarray}
\end{theorem}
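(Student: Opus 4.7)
The plan is to apply the Diamond Lemma for Ring Theory to the presentation of $\rq$ given in Lemma~\ref{lemfi3.1}, using the reduction system
\begin{eqnarray*}
AB &\longrightarrow& qBA+\g,\\
A\g &\longrightarrow& \g A,\\
B\g &\longrightarrow& \g B,
\end{eqnarray*}
obtained by rewriting relations \eqref{eq4}, \eqref{eq5}, \eqref{eq6} so that the left-hand sides are the monomials we want to eliminate. The words that contain no subword of the form $AB$, $A\g$, or $B\g$ are precisely those in which every $\g$ precedes every $A$ and every $B$, and every $B$ precedes every $A$; these are exactly the monomials $\g^{h}B^{m}A^{n}$ in \eqref{eq7}. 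So once the hypotheses of the Diamond Lemma are verified, the theorem will follow.

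First I would specify a semigroup partial order on $\left<\{A,B,\g\}\right>$ that is compatible with the reductions, for example the degree-lexicographic order induced by $A>B>\g$: words are compared first by length, then lexicographically. Under this order one checks immediately that $AB>BA$, $AB>\g$, $A\g>\gamma A$, and $B\g>\g B$, so every reduction strictly decreases the maximal monomial on the right-hand side, and the order satisfies the descending chain condition on words. This furnishes the compatible order required by the Diamond Lemma.

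Next I would enumerate the ambiguities. Since every left-hand side has length $2$, there are no inclusion ambiguities. For overlap ambiguities I need to find words $w$ such that two distinct left-hand sides appear as overlapping subwords. Comparing the final letters $B,\g,\g$ of the left-hand sides with the initial letters $A,A,B$, the only match is the final $B$ of $AB$ with the initial $B$ of $B\g$, producing the single overlap $AB\g$. I would then resolve this ambiguity explicitly: reducing via $AB\to qBA+\g$ yields $(qBA+\g)\g$, which after applying $A\g\to\g A$ and $B\g\to\g B$ becomes $q\g BA+\g^{2}$; reducing instead via $B\g\to\g B$ yields $A\g B$, which after $A\g\to\g A$ and $AB\to qBA+\g$ becomes $\g(qBA+\g)=q\g BA+\g^{2}$. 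Both reductions land on the same element, so the ambiguity is resolvable.

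With compatibility and resolvability of all ambiguities in hand, the Diamond Lemma [1, Theorem 1.2] guarantees that the irreducible monomials form an $\F$-basis for $\rq$. As noted above, the irreducible monomials are precisely the $\g^{h}B^{m}A^{n}$ for $h,m,n\in\N$, proving the theorem. The only step that requires actual calculation is the resolution of the single overlap $AB\g$; I expect this to be routine rather than a genuine obstacle, and the main care needed is simply in choosing an ordering that handles both the length drop in $AB\to qBA+\g$ and the equal-length rearrangements $A\g\to\g A$, $B\g\to\g B$ simultaneously.
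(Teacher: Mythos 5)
Your proposal is correct and follows essentially the same route as the paper: the same reduction system built from relations \eqref{eq4}--\eqref{eq6}, the same single overlap ambiguity $AB\g$ resolved to $q\g BA+\g^{2}$ from both sides, and the same identification of the irreducible words as $\g^{h}B^{m}A^{n}$. Your explicit choice of the degree-lexicographic order with $A>B>\g$ is a detail the paper leaves implicit, but it does not change the argument.
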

\begin{proof}
From relations \eqref{eq4}, \eqref{eq5}, and \eqref{eq6} which define $\rq$, we form the following reduction system $S$.
\begin{eqnarray}
& S= \{\L1=(AB, \g + qBA), \, \S1=(A \g, \g A), \,  \T2=(B\g, \g B)\}.& \label{eq8}
\end{eqnarray}
With the reduction system $S$, the only ambiguity is $(\L1 , \T2 , A, B,  \g)$ which involves the word $AB \g$. It can be easily shown that $r= r_{I\T2 A} \circ r_{B\S1 I}$ and $r' = r_{\g \L1 I}\circ r_{I\S1 B}$ are the desired composition of reductions such that
\begin{eqnarray}
r(f_{\L1} \g)=r'(Af_\T2). \label{eq9}
\end{eqnarray}
Hence the only ambiguity $(\L1 , \T2 , A, B,  \g)$ of $S$ is resolvable. By the Diamond Lemma, the set of all irreducible words on $\genset$ with respect to the reduction system $S$ forms a basis for $\rq$. What remains to be shown is that this basis is equal to the set 
\begin{eqnarray}
   \{\g ^{h} B^{m}  A^{n} :\ \ h, \, m, \, n \in\N \}. \label{eq10}
\end{eqnarray}
It is clear that elements in \eqref{eq10} are irreducible with respect to the reduction system $S$.

Now suppose we have a word $W \notin \{ \g^{h} B^{m}  A^{n} :\ \ h, \, m, \, n \in\N \}$. Then $W$ has a subword of the form $A^{j}B^{k}\g^{l}$  where $j, \, k, \, l \in \N$ and at most one of these powers is equal to zero. We have these cases to consider: when  $j=0,  \, k=0, \, l=0$, and if $j, \, k, \, l \in \Z^+$.  If $j=0$, then a reduction which involves $\T2$ would act nontrivially on $A^{j}B^{k}\g^{l} = B^{k}\g^{l}$. For $k=0$,  a reduction which involves $\S1$ would act nontrivially on $A^{j}B^{k}\g^{l} = A^{j}\g^{l}$, and if $l=0$, a reduction which involves $\L1$ would act nontrivially on $A^{j}B^{k}\g^{l} = A^{j}B^{k}$. For the case $j, \, k, \, l$ $ \in\Z^+$,  a reduction which involves $\L1$ or $\T2$ would act nontrivially on $A^{j}B^{k}\g^{l}$.

In any of these cases, $W$ is not irreducible. Hence, any element in \eqref{eq10} is irreducible with respect to the reduction system $S$. 
The result follows. 
\end{proof} 
%%%%%%%%%%%%%%%%%%%%%%%%%%%%%%%%%%%%%%%%%%%%

We recall the following relations on $q$-special combinatorics from [7, Appendix C]. For a given $n \in \N$, $p \in \Z$ and $z \in \F$,
\begin{eqnarray}
\{n\}_z &:=& \sum_{l=0}^{n-1}z^l, \label{eq11} \\ 
\{n\}_z! &:=&\prod_{l=1}^{n}\{l\}_z, \label{eq12}\\
\binom{n}{p}_z &:=& \frac{\{n\}_z !}{\{p\}_z !\{n-p\}_z !}. \label{eq13} 
\end{eqnarray}
If $n \leq 0$, then we interpret \eqref{eq11} as the empty sum 0, and \eqref{eq12} as the empty product 1.
If  $p<0$ or $p>n$,  we interpret \eqref{eq13} to be zero and is equal to 1 if $p=0$ or $p=n$. Additionally, we also have the following relation.
\begin{eqnarray}
\binom{n}{p-1}_z +z^p\binom{n}{p}_z &=& \binom{n+1}{p}_z =z^{n+1-p}\binom{n}{p-1}_z +\binom{n}{p}_z.  \label{eq272}
\end{eqnarray}
Now, from \eqref{eq4}, we can derive 
\begin{eqnarray}
    AB=\g + qBA,   \label{eq14}
\end{eqnarray}
the right-hand side of which is clearly a linear combination of elements in \eqref{eq10}. We use \eqref{eq14} in generalizing the formula for the expansion of words of the form $A^j B^k$ where $j, \, k$ $ \in\N$ as linear combinations of basis elements in \eqref{eq7}.
\begin{proposition}
For any $n \in\N$,
\begin{eqnarray}
    A^n B &=& \nqcombi \g A^{n-1} + q^n B A^n , \ \emph{and} \label{eq15} \\
    AB^n &=& \nqcombi \g B^{n-1} + q^n B^n A. \label{eq16}
\end{eqnarray}
\end{proposition}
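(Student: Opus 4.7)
The plan is to prove both identities by induction on $n$, using the base relation \eqref{eq14} together with the centrality relations \eqref{eq5} and \eqref{eq6}. The two arguments are structurally identical (interchanging the roles of $A$ and $B$), so I would write out the inductive step for \eqref{eq15} in full and then indicate that \eqref{eq16} follows by the analogous computation using \eqref{eq6} in place of \eqref{eq5}.

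For \eqref{eq15}, the base case $n=0$ is trivial: the right-hand side reduces to $0\cdot\gamma A^{-1} + 1\cdot B = B$ since $\{0\}_q = 0$ by the empty-sum convention. The case $n=1$ is just a restatement of \eqref{eq14}. For the inductive step, assuming $A^n B = \{n\}_q \gamma A^{n-1} + q^n B A^n$, I would compute
\begin{eqnarray*}
A^{n+1}B = A\cdot A^n B &=& \{n\}_q (A\gamma) A^{n-1} + q^n (AB) A^n \\
&=& \{n\}_q \gamma A^n + q^n(\gamma + qBA)A^n \\
&=& \bigl(\{n\}_q + q^n\bigr)\gamma A^n + q^{n+1}BA^{n+1},
\end{eqnarray*}
applying $A\gamma = \gamma A$ from \eqref{eq5} in the first line and \eqref{eq14} in the second.

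The only piece of real content is then the $q$-combinatorial identity $\{n\}_q + q^n = \{n+1\}_q$, which is immediate from the definition \eqref{eq11} since $\sum_{l=0}^{n-1} q^l + q^n = \sum_{l=0}^{n} q^l$. This closes the induction and gives \eqref{eq15}. The proof of \eqref{eq16} is identical with $B$ and $A$ swapped: one uses \eqref{eq6} to move $\gamma$ past $B$, and the same base relation $AB = \gamma + qBA$ to rewrite $AB^{n+1} = (AB)B^n$ in the inductive step, ultimately invoking the same identity $\{n\}_q + q^n = \{n+1\}_q$.

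I do not expect a genuine obstacle here; the whole argument is a routine induction, and the only subtlety is bookkeeping the $q$-binomial conventions in \eqref{eq11}--\eqref{eq13} so that the $n=0$ case matches the stated formula.
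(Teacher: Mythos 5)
Your proposal is correct and follows essentially the same route as the paper: induction on $n$, peeling off one factor of $A$ (resp.\ $B$), commuting $\gamma$ past the generator via \eqref{eq5} (resp.\ \eqref{eq6}), substituting $AB=\gamma+qBA$, and closing with the recursion for $\{n+1\}_q$. The only cosmetic difference is that your chosen decomposition $AB^{n+1}=(AB)B^n$ lands on the variant $1+q\{n\}_q=\{n+1\}_q$ rather than $\{n\}_q+q^n=\{n+1\}_q$, which is equally immediate.
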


\begin{proof}
 We shall use induction on $n$. The case $n=0$ is trivial. If $n=1$, the case will simply be reduced to \eqref{eq14} which clearly satisfies \eqref{eq15}. Suppose \eqref{eq15} holds for some $k \in\N$.

Observe that
\begin{eqnarray}
\nonumber A^{k+1}B &=& A(A^k B), \\
\nonumber &=& A(\{k\}_q \g A^{k-1} + q^k B A^k), \\
\nonumber &=& \{k\}_q \g A^{k} + q^k AB A^k, \\
\nonumber &=& \{k\}_q \g A^{k} + q^k(\g + qBA)A^k, \\
\nonumber &=& \{k+1\}_q \g A^{k} + q^{k+1} B A^{k+1}.
\end{eqnarray}
This implies that \eqref{eq15} holds for $k+1$ so by induction, we get the desired result.

By similar routine computations and arguments, the relation \eqref{eq16} can be shown to be true for any $n\in\N$. 
\end{proof}

\begin{lemma} For any $n \in\N$,
\begin{eqnarray} 
    A^n B^n (q^{n+1}BA + \{n+1\}_q \g) &=& A^{n+1} B^{n+1},  \ \emph{and}\label{eq17} \\
 B^n A^n (BA - \nqcombi\g) &=& q^n B^{n+1}A^{n+1}. \label{eq18}  
\end{eqnarray}
\end{lemma}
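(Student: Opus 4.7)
The plan is to prove both identities by direct computation, leveraging the two expansion formulas \eqref{eq15} and \eqref{eq16} from the preceding proposition together with the fact that $\g$ is central in $\rq$.

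For identity \eqref{eq17}, I would start from the right-hand side and expand $A^{n+1}B^{n+1}=A^{n}(AB^{n+1})$. Applying \eqref{eq16} to the inner factor gives
\[
A^{n+1}B^{n+1} = A^{n}\bigl(\{n+1\}_q \g B^{n} + q^{n+1} B^{n+1} A\bigr) = \{n+1\}_q A^{n} \g B^{n} + q^{n+1} A^{n} B^{n+1} A.
\]
Then I would use centrality of $\g$ to move the $\g$ in the first summand through $A^n$ and pull it out to the right of $B^n$, and likewise factor out $A^n B^n$ on the left to collect the expression
\[
A^{n} B^{n} \bigl(\{n+1\}_q \g + q^{n+1} BA\bigr),
\]
which is exactly the left-hand side of \eqref{eq17}.

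For identity \eqref{eq18}, I would expand the left-hand side directly:
\[
B^{n} A^{n}(BA - \{n\}_q \g) = B^{n}(A^{n} B) A - \{n\}_q B^{n} A^{n} \g.
\]
Applying \eqref{eq15} to $A^{n}B$ in the first term yields
\[
B^{n}\bigl(\{n\}_q \g A^{n-1} + q^{n} B A^{n}\bigr) A - \{n\}_q B^{n} A^{n} \g = \{n\}_q B^{n} \g A^{n} + q^{n} B^{n+1} A^{n+1} - \{n\}_q B^{n} A^{n} \g.
\]
Since $\g$ commutes with everything in $\rq$, the outer two terms cancel, leaving $q^{n} B^{n+1} A^{n+1}$.

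Both arguments are short and essentially mechanical; the only subtlety to flag is that the cancellation in each case relies precisely on the centrality of $\g$ (relations \eqref{eq5}--\eqref{eq6}), which is what lets $\g$ slide freely past any power of $A$ or $B$. Since the proposition has already been established for all $n\in\N$, there is no need for a further induction here, and I do not anticipate a genuine obstacle.
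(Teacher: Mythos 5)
Your proposal is correct and follows essentially the same route as the paper: both arguments rest on the expansion formulas \eqref{eq15}--\eqref{eq16} together with the centrality of $\g$, with the only cosmetic difference being that for \eqref{eq17} you compute from the right-hand side to the left while the paper goes in the other direction. No gap to report.
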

\begin{proof}
From \eqref{eq16}, we have 
\begin{eqnarray}
AB^n - \nqcombi \g B^{n-1} &=& q^n B^n A.   \label{eq19} 
\end{eqnarray}
Now, observe that 
\begin{eqnarray}
\nonumber A^n B^n (q^{n+1}BA + \{n+1\}_q \g) &=& A^n q^{n+1} B^{n+1}A + \{n+1\}_q \g A^n B^n, \\
\nonumber &=& A^n (AB^{n+1} -\{n+1\}_q \g B^n) + \{n+1\}_q \g A^n B^n, \\
\nonumber &=& A^{n+1} B^{n+1},
\end{eqnarray}
and
\begin{eqnarray}
\nonumber B^n A^n (BA - \nqcombi\g) &=& B^n(A^n BA - \nqcombi\g A^n), \\
\nonumber &=& B^n((\nqcombi \g A^{n-1} + q^n B A^n)A - \nqcombi\g A^n), \\
\nonumber &=& B^n(\nqcombi \g A^n + q^n B A^{n+1} - \nqcombi\g A^n),  \\
\nonumber &=& B^n(q^n BA^{n + 1}), \\
\nonumber &=& q^n B^{n+1} A^{n+1}. \qedhere
\end{eqnarray}
\end{proof}

Observe that if we substitute the corresponding expression for $A^n B^n$ on the left-hand side of \eqref{eq17} and similarly for $A^{n-1}B^{n-1}$ to the result and so on until it is possible, and applying the same process on the left-hand side of \eqref{eq18}, the result is a  product of linear combinations of $BA \mbox{ and } \g$. This repeated process of substitution would result to the following.
\begin{corollary} For any $n \in\N$,
\begin{eqnarray}
    A^n B^n &=& \prod_{i=1}^{n} (q^i BA + \{i\}_q \g ),  \ \emph{and}  \label{eq20} \\
    q^{\binom{n}{2}}B^n A^n &=& \prod_{j=0}^{n-1} (BA - \{j\}_q \g ). \label{eq21}
\end{eqnarray}
\end{corollary}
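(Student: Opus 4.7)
The plan is to derive both identities by straightforward induction on $n$, using relations \eqref{eq17} and \eqref{eq18} respectively as the inductive step, much as the paragraph preceding the corollary suggests.

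Before starting the induction, I would observe that since $\g$ is central in $\rq$, the element $BA$ commutes with $\g$, so any two factors of the form $q^i BA + \{i\}_q \g$ and $q^{i'} BA + \{i'\}_q \g$ (or $BA - \{j\}_q \g$ and $BA - \{j'\}_q \g$) commute with each other. Hence the products on the right-hand sides of \eqref{eq20} and \eqref{eq21} are unambiguously defined, and I can freely append a new factor on the right without disturbing the product.

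For \eqref{eq20}, the base case $n=0$ reads $A^0 B^0 = I$, which agrees with the empty product. Assuming \eqref{eq20} for some $n\in\N$, I would multiply both sides on the right by $q^{n+1}BA + \{n+1\}_q\g$ and invoke \eqref{eq17} on the left-hand side to obtain $A^{n+1}B^{n+1} = \prod_{i=1}^{n+1}(q^iBA + \{i\}_q\g)$, completing the induction.

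For \eqref{eq21}, the base case $n=0$ holds trivially since $q^{\binom{0}{2}}B^0A^0 = I$ equals the empty product. For the inductive step, I would multiply both sides of the inductive hypothesis on the right by $BA - \{n\}_q\g$ and apply \eqref{eq18}, yielding
\begin{equation}
\nonumber q^{\binom{n}{2}}\cdot q^n B^{n+1}A^{n+1} \;=\; \prod_{j=0}^{n}(BA - \{j\}_q\g).
\end{equation}
The only bookkeeping to verify is the exponent identity $\binom{n}{2} + n = \binom{n+1}{2}$, which follows from $\tfrac{n(n-1)}{2} + n = \tfrac{n(n+1)}{2}$. This gives \eqref{eq21} for $n+1$, closing the induction. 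No step here is a genuine obstacle; the only thing to watch is the index shift in the exponent of $q$ in the second identity, which is why the factor $q^{\binom{n}{2}}$ appears on the left rather than being absorbed into the product.
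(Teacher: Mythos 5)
Your proposal is correct and follows essentially the same route as the paper: induction on $n$ with the previous lemma's identities \eqref{eq17} and \eqref{eq18} supplying the inductive step (the paper writes out the argument for \eqref{eq20} and leaves \eqref{eq21} as "similar"; you supply the exponent bookkeeping $\binom{n}{2}+n=\binom{n+1}{2}$ explicitly). The only cosmetic difference is your base case $n=0$ versus the paper's $n=1$, and your (harmless, not strictly needed) remark that the factors commute.
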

\begin{proof}
We prove \eqref{eq20} and \eqref{eq21} by induction on $n$. We first consider \eqref{eq20} which holds for $n=1$ since
\begin{eqnarray}
 \nonumber AB&=& \g + qBA, \\
 \nonumber  &=& qBA + \{1\}_q \g, \\
 \nonumber  &=&\prod_{i=1}^{1} (q^i BA + \{i\}_q \g ).
\end{eqnarray}

We assume that \eqref{eq20} holds for some $k \in\N$. Observe that 
\begin{eqnarray}
\nonumber    A^{k+1} B^{k+1} &=& A^k B^k (q^{k+1}BA + \{k+1\}_q \g), \\
\nonumber    &=& \prod_{i=1}^{k} (q^i BA + \{i\}_q \g )(q^{k+1}BA + \{k+1\}_q \g),\\
\nonumber &=& \prod_{i=1}^{k+1} (q^i BA + \{i\}_q \g ).
\end{eqnarray}
This suggests that \eqref{eq20} also holds  for $k+1$ so by induction, \eqref{eq20} is true for any $n \in\N$.

We can also prove \eqref{eq21} by induction on $n$ in similar manner.
\end{proof}
%%%%%%%%%%%%%%%%%%%%%%%%%%%%%%%%%%%%%%%%%%%%%%%%%%%%%%%%%%
\section{Another presentation of $\rq$}\label{sec4}
In this section, we present properties of the element $[A,B]$ of $\rq$. These properties are useful in deriving reordering formulae in $\rq$. 

\begin{lemma} \label{Lemf4.1}
The following relations hold in $\rq$:
\begin{eqnarray}
    A\LieAB &=& q\LieAB A,  \quad \emph{and}\label{eq23}\\
    \LieAB B &=& qB\LieAB   \label{eq24}
    \end{eqnarray}
\end{lemma}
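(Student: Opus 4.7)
The plan is to reduce each side of the two identities directly to the basis $\{\g^h B^m A^n\}$ from the preceding theorem and verify equality term-by-term. The only tools required are the rewriting rule $AB = \g + qBA$ coming from \eqref{eq4} together with the centrality of $\g$ recorded in \eqref{eq5} and \eqref{eq6}.

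For \eqref{eq23}, I would first expand $\LieAB = AB - BA$ and treat each side separately. On the left, $A\LieAB = A^2B - ABA$. One application of $AB = \g + qBA$ gives $ABA = (\g + qBA)A = \g A + qBA^2$, where the $A\g$ produced is collapsed to $\g A$ by \eqref{eq5}. A further application yields $A^2B = A(\g + qBA) = \g A + qABA = (1+q)\g A + q^2 BA^2$. Subtracting produces $A\LieAB = q\g A + q(q-1)BA^2$. On the right, $q\LieAB A = q(AB)A - qBA^2 = q(\g + qBA)A - qBA^2 = q\g A + q(q-1)BA^2$, which matches.

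For \eqref{eq24}, the computation is completely symmetric with the roles of $A$ and $B$ interchanged. I would expand $\LieAB B = AB^2 - BAB$, reduce $AB^2$ by using \eqref{eq16} with $n=2$ (or, equivalently, two applications of \eqref{eq4}), and note that $BAB = B(\g + qBA) = \g B + qB^2A$ using \eqref{eq6}. Carrying out the analogous reduction on $qB\LieAB = qBAB - qB^2 A$, both sides simplify to $q\g B + q(q-1)B^2A$.

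There is no serious obstacle here; the proof is a bookkeeping exercise in the presentation given by Lemma~\ref{lemfi3.1}. The only subtleties are applying centrality at the right moment so that $\g$ slides past $A$ or $B$ without obstruction, and tracking the factors of $q$ that accrue each time the substring $AB$ is rewritten as $\g + qBA$.
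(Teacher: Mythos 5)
Your computation is correct and rests on exactly the same two facts the paper uses, namely the rewriting rule $AB=\g+qBA$ from \eqref{eq14} and the centrality of $\g$. The paper's proof is just slightly slicker: it substitutes $AB=\g+qBA$ once into $A\LieAB = A(AB)-(AB)A$, cancels the $\g$-terms by centrality, and reads off $q(ABA-BAA)=q\LieAB A$ directly, whereas you reduce both sides all the way to the normal form $q\g A + q(q-1)BA^{2}$ and compare; the verification is equivalent.
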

 \begin{proof}
 Note that $A\LieAB = A(AB) - A(BA).$ Replacing $AB$ by the left hand side of \eqref{eq14}, we have $A(\g + qBA) - (\g + qBA)A = qABA - qBAA= q\LieAB A.$ Also, $\LieAB B =(AB)B - (BA)B.$ Using \eqref{eq14} and simplifying the result, we have $qBAB - qBBA = qB\LieAB$. 
 \end{proof}
\begin{proposition}\label{Profin4.2}
For all $ k , \, n \in\N$,
\begin{eqnarray}
    A^k\LieAB &=& q^k \LieAB A^k,  \quad \emph{and}   \label{eq25}\\
    \LieAB B^k &=& q^k B^k \LieAB \label{eq26}
    \end{eqnarray}
    Moreover,
\begin{eqnarray}
    A^k\LieAB^n &=& q^{kn} \LieAB ^n A^k,  \quad \emph{and}  \label{eq27}\\
    \LieAB^n B^k &=& q^{kn} B^k \LieAB^n.   \label{eq28}
    \end{eqnarray}
\end{proposition}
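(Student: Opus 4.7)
The plan is to prove all four identities by straightforward induction, using Lemma~\ref{Lemf4.1} as the base case machinery. Specifically, I would first establish \eqref{eq25} and \eqref{eq26} by induction on $k$, and then bootstrap from these to prove \eqref{eq27} and \eqref{eq28} by induction on $n$ with $k$ fixed.

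For \eqref{eq25}, the base case $k=0$ is trivial and $k=1$ is exactly \eqref{eq23}. For the inductive step, assuming $A^k[A,B] = q^k[A,B]A^k$, I would write
\begin{eqnarray*}
A^{k+1}[A,B] &=& A\bigl(A^k[A,B]\bigr) = A\bigl(q^k[A,B]A^k\bigr) = q^k\bigl(A[A,B]\bigr)A^k \\
&=& q^k\bigl(q[A,B]A\bigr)A^k = q^{k+1}[A,B]A^{k+1},
\end{eqnarray*}
where the second-to-last equality again invokes \eqref{eq23}. The proof of \eqref{eq26} is completely analogous: apply the inductive hypothesis to push $[A,B]$ past $B^k$, then apply \eqref{eq24} to push it past the remaining $B$.

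For \eqref{eq27}, fix $k$ and induct on $n$. The case $n=1$ is \eqref{eq25}, already in hand. Assuming $A^k[A,B]^n = q^{kn}[A,B]^n A^k$, I would compute
\begin{eqnarray*}
A^k[A,B]^{n+1} &=& \bigl(A^k[A,B]^n\bigr)[A,B] = q^{kn}[A,B]^n\bigl(A^k[A,B]\bigr) \\
&=& q^{kn}[A,B]^n\bigl(q^k[A,B]A^k\bigr) = q^{k(n+1)}[A,B]^{n+1}A^k,
\end{eqnarray*}
invoking \eqref{eq25} in the third equality. The derivation of \eqref{eq28} proceeds in the same fashion, using \eqref{eq26} in the inductive step to slide a single $[A,B]$ past $B^k$.

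There is no real obstacle here; the result is essentially bookkeeping of $q$-powers. The only thing one must be careful about is the order in which the two inductions are nested: \eqref{eq25} and \eqref{eq26} must be proved first so that they are available inside the inductive step for \eqref{eq27} and \eqref{eq28}, since the latter move a block $[A,B]^n$ past $A^k$ or $B^k$ by peeling off one copy of $[A,B]$ at a time.
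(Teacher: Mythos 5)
Your proposal is correct and follows essentially the same route as the paper: induction on $k$ for \eqref{eq25}--\eqref{eq26} using Lemma~\ref{Lemf4.1} in the inductive step, then induction on $n$ for \eqref{eq27}--\eqref{eq28} peeling off one factor of $\LieAB$ at a time and invoking the already-established \eqref{eq25}--\eqref{eq26}. The only cosmetic difference is which end of the block $\LieAB^{n+1}$ you peel the single commutator from, which does not affect the argument.
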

\begin{proof}
We prove \eqref{eq25} and \eqref{eq26} by induction on $k$ while \eqref{eq27} and \eqref{eq28} by induction on $n$.
Notice that if $k=1$, then \eqref{eq25} and \eqref{eq26} would clearly hold by Lemma~\ref{Lemf4.1}.

Suppose \eqref{eq25} and \eqref{eq26} hold for some $r \in \N$.
Observe, 
\begin{eqnarray}
 \nonumber A^{r+1}\LieAB &=& AA^r \LieAB, \\
 \nonumber   &=& Aq^r \LieAB A^r, \\
\nonumber    &=& q^{r+1} \LieAB A^{r+1}.
    \end{eqnarray}
Also, 
\begin{eqnarray}
\nonumber \LieAB B^{r+1} &=&  \LieAB B^rB , \\
\nonumber &=& q^rB^r \LieAB B , \\
\nonumber &=& q^{r+1}B^{r+1} \LieAB. 
\end{eqnarray}
These imply that \eqref{eq25} and \eqref{eq26} also hold for $r+1$ and by induction, both equations also hold for any natural number $k$.

We then prove \eqref{eq27}and \eqref{eq28}. If $n=1$, the case would simply be reduced to the case of \eqref{eq25}  and \eqref{eq26}.

Assume \eqref{eq27} and \eqref{eq28} to be true for some $j \in\N$. Observe that
\begin{eqnarray}
\nonumber  A^k \LieAB ^{j+1} &=& A^k \LieAB^j \LieAB,\\
\nonumber&=& q^{kj}\LieAB^j A^k \LieAB, \\
\nonumber &=&q^{k(j+1)}\LieAB^{j+1} A^k.
\end{eqnarray}

Also, 
\begin{eqnarray}
\nonumber \LieAB^{j+1}B^k &=& \LieAB \LieAB^jB^k, \\
\nonumber &=& q^{jk}\LieAB B^k \LieAB^j, \\
\nonumber &=& q^{k(j+1)}B^k \LieAB^{j+1}.
\end{eqnarray}
These imply that \eqref{eq27} and \eqref{eq28} is also true for $k+1$ and consequently for any natural number $n$ by induction.
\end{proof}

By the Lie backet in $\rq$,  we have 
\begin{eqnarray}
    AB &=& \LieAB + BA.  \label{eq29}
\end{eqnarray}   
We replace $AB$ on \eqref{eq4} by the right-hand side expression of \eqref{eq29} which would give us
\begin{eqnarray}
(1-q)BA &=& \g -\LieAB.   \label{eq30}
\end{eqnarray}

Multiplying both sides of \eqref{eq20} and \eqref{eq21} by $(1-q)^n$, and using \eqref{eq30} to simplify the result, we can derive the equations
\begin{eqnarray}
 (1-q)^nA^n B^n &=& \prod_{i=1}^{n} (\g-q^i\LieAB),   \quad \mbox{and} \label{eq31}  \\
 q^{\binom{n}{2}}(1-q)^nB^n A^n &=&  \prod_{j=0}^{n-1}(q^j\g-\LieAB).   \label{eq32}
\end{eqnarray}
We now expand the products in right-hand sides of equations \eqref{eq31} and \eqref{eq32}.
\begin{theorem} \label{Theo4.3}
For any $n \in \N$,
\begin{eqnarray}
   (1-q)^nA^n B^n  &=& \sum_{i=0}^{n}(-1)^i q^{\binom{i+1}{2}} {\binom{n}{i}}_q \g^{n-i} \LieAB^i, \quad \emph{and}    \label{eq33}\\
    q^{\binom{n}{2}}(1-q)^nB^n A^n &=& \sum_{i=0}^{n}(-1)^i q^{\binom{n-i}{2}} {\binom{n}{i}}_q \g^{n-i}\LieAB^i.\label{eq34}
\end{eqnarray}
\end{theorem}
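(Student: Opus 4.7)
The plan is to prove both identities by induction on $n$, starting from the product formulas \eqref{eq31} and \eqref{eq32}. The key preliminary observation is that the new generator $\g$ is central in $\rq$ by relations \eqref{eq5} and \eqref{eq6}, so $\g$ commutes with $A$, with $B$, and hence with $\LieAB=AB-BA$. Consequently, the products $\prod_{i=1}^{n}(\g-q^i\LieAB)$ and $\prod_{j=0}^{n-1}(q^j\g-\LieAB)$ may be expanded as if in a commutative polynomial ring in $\g$ and $\LieAB$, and the whole problem reduces to a combinatorial identity about $q$-binomial coefficients.

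To prove \eqref{eq33}, I would argue by induction on $n$, the base case $n=0$ being the trivial identity $1=1$. For the inductive step, assume \eqref{eq33} holds for $n$ and multiply both sides of \eqref{eq31} by $(\g-q^{n+1}\LieAB)$. The right-hand side becomes
\begin{eqnarray}
\nonumber \sum_{i=0}^{n}(-1)^i q^{\binom{i+1}{2}}\binom{n}{i}_q \g^{n+1-i}\LieAB^i - \sum_{i=0}^{n}(-1)^i q^{\binom{i+1}{2}+n+1}\binom{n}{i}_q \g^{n-i}\LieAB^{i+1}.
\end{eqnarray}
After reindexing $i\mapsto i-1$ in the second sum and collecting terms with the same $\g^{n+1-i}\LieAB^i$, the coefficient of $(-1)^i\g^{n+1-i}\LieAB^i$ becomes $q^{\binom{i+1}{2}}\binom{n}{i}_q+q^{\binom{i}{2}+n+1}\binom{n}{i-1}_q$. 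Factoring out $q^{\binom{i+1}{2}}$ and using $\binom{i+1}{2}-\binom{i}{2}=i$, this reduces to showing
\begin{eqnarray}
\nonumber \binom{n}{i}_q + q^{n+1-i}\binom{n}{i-1}_q = \binom{n+1}{i}_q,
\end{eqnarray}
which is precisely the second form of the Pascal-like recurrence in \eqref{eq272}. This completes the induction.

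The argument for \eqref{eq34} is entirely parallel: multiply \eqref{eq32} at stage $n$ by the new factor $(q^n\g-\LieAB)$, reindex the shifted sum, and the combinatorial identity one needs becomes
\begin{eqnarray}
\nonumber q^n\binom{n}{i}_q + \binom{n}{i-1}_q = \binom{n+1}{i}_q,
\end{eqnarray}
after pulling out $q^{\binom{n+1-i}{2}}$ and using $\binom{n+1-i}{2}-\binom{n-i}{2}=n-i$; this is the first form of \eqref{eq272} with $p=i$.

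I do not expect any serious obstacle; the whole proof is essentially bookkeeping of exponents of $q$. The only place where care is required is in verifying that the exponent of $q$ produced by the inductive step, after dividing out a common factor, matches one of the two forms stated in \eqref{eq272}; here the identities $\binom{i+1}{2}-\binom{i}{2}=i$ and $\binom{n+1-i}{2}-\binom{n-i}{2}=n-i$ are what make the two versions of the $q$-Pascal recurrence align with the two statements of the theorem.
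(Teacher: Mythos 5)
Your proof is correct in substance, and it organizes the argument differently from the paper. The paper's proof of \eqref{eq33} runs the induction directly on $A^nB^n$: it writes $(1-q)^{k+1}A^{k+1}B^{k+1}=(1-q)^{k+1}A(A^kB^k)B$, pushes $A$ through $\g^{k-i}\LieAB^i$ using the reordering formulas of Proposition~\ref{Profin4.2} (picking up a factor $q^i$), substitutes $(1-q)AB=\g-q\LieAB$, and then recombines via \eqref{eq272}. You instead take the product formulas \eqref{eq31} and \eqref{eq32} as the starting point, observe that $\g$ is central so that all the factors $\g-q^i\LieAB$ (respectively $q^j\g-\LieAB$) commute pairwise, and reduce the whole theorem to a commutative $q$-binomial expansion proved by multiplying in one new factor at a time. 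This is arguably the cleaner route — it is what the paper itself announces ("we now expand the products in the right-hand sides of \eqref{eq31} and \eqref{eq32}") but does not literally carry out — and it isolates the noncommutative content in the single remark that $\g$ commutes with $\LieAB$, leaving only bookkeeping with \eqref{eq272}. One slip to fix: in the second half, after pulling out $q^{\binom{n+1-i}{2}}$ the surviving power of $q$ on the $\binom{n}{i}_q$ term is $q^{\binom{n-i}{2}+n-\binom{n+1-i}{2}}=q^{i}$, not $q^{n}$, so the identity you need is
\begin{equation*}
q^{i}\binom{n}{i}_q+\binom{n}{i-1}_q=\binom{n+1}{i}_q,
\end{equation*}
which is indeed the first form of \eqref{eq272} with $p=i$ that you name in words; the displayed version with $q^{n}$ is false as written. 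With that exponent corrected, the argument goes through.
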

\begin{proof}
We prove \eqref{eq33} and \eqref{eq34} by induction on $n$. Notice that if $n=0$, \eqref{eq33} and \eqref{eq34} are trivially satisfied.  
Now, we first prove \eqref{eq33}. 

Suppose \eqref{eq33}  holds for some $k \in \N$. We show that it also holds for $k+1$.
\\Observe that,
\begin{eqnarray}
\nonumber (1-q)^{k+1}A^{k+1} B^{k+1}&=&(1-q)^{k+1}AA^{k} B^{k}B.
\end{eqnarray}
Replacing $A^{k} B^{k}$ by its equivalent expression from the assumption then simplifying the result using Proposition~\ref{Profin4.2} would give us
\begin{eqnarray}
 & & (1-q)\left(\sum_{i=0}^{k}(-1)^i q^{\binom{i +1}{2}}q^i {\binom{k}{i}}_q \g^{k-i} \LieAB^i \right)AB. \label{sum1}
\end{eqnarray}
From equation \eqref{eq31}, $(1-q)AB= \g -q[A,B]$, and so \eqref{sum1} becomes
\begin{eqnarray}
 \sum_{i=0}^{k}(-1)^i q^{\binom{i +1}{2}}q^i {\binom{k}{i}}_q \g^{k-i+1} \LieAB^i -\sum_{i=0}^{k}(-1)^i q^{\binom{i +1}{2}}q^{i+1} {\binom{k}{i}}_q \g^{k-i} \LieAB^{i+1} , 
\end{eqnarray}
which is equivalent to 
\begin{eqnarray}
\sum_{i=0}^{k}(-1)^i q^{\binom{i +1}{2}}q^i {\binom{k}{i}}_q \g^{k-i+1} \LieAB^i +\sum_{i=1}^{k+1}(-1)^{i} q^{\binom{i +1}{2}} {\binom{k}{i-1}}_q \g^{k-i+1} \LieAB^{i} \label{sum2}.
\end{eqnarray}
Denote \eqref{sum2} by $L_1$. In order to use relation \eqref{eq272}, we decompose $L_1$ in the following manner
\begin{eqnarray}
\nonumber L_1 &=& (-1)^0 q^{\binom{0 +1}{2}}q^0 {\binom{k}{0}}_q \g^{k-0+1} \LieAB^0 + \sum_{i=1}^{k}(-1)^i q^{\binom{i +1}{2}}q^i {\binom{k}{i}}_q \g^{k-i+1} \LieAB^i \\ 
\nonumber &\quad& +\sum_{i=1}^{k}(-1)^{i} q^{\binom{i +1}{2}} {\binom{k}{i-1}}_q \g^{k-i+1} \LieAB^{i} \\
\nonumber &\quad& +(-1)^{k+1} q^{\binom{(k+1) +1}{2}} {\binom{k}{k}}_q \g^{k-(k+1)+1} \LieAB^{k+1}
\end{eqnarray}
and so 
\begin{eqnarray}
\nonumber L_1 &=& (-1)^0 q^{\binom{0 +1}{2}} {\binom{k+1}{0}}_q \g^{k-0+1} \LieAB^0  \\
\nonumber &\quad& +\sum_{i=1}^{k}(-1)^{i} q^{\binom{i +1}{2}} \left(q^i {\binom{k}{i}}_q +{\binom{k}{i-1}}_q \right) \g^{k-i+1} \LieAB^{i} \\
\nonumber &\quad& +(-1)^{k+1} q^{\binom{(k+1) +1}{2}} {\binom{k+1}{k+1}}_q \g^{k-(k+1)+1} \LieAB^{k+1}, \\
\nonumber &=&\sum_{i=0}^{k+1}(-1)^{i} q^{\binom{i +1}{2}} \ {\binom{k+1}{i}}_q  \g^{k-i+1} \LieAB^{i}
\end{eqnarray}
from which the desired result follows.

Similarly, we can show that if we assume \eqref{eq34} holds for some $j \in \N$, then the same is  true for $j+1$.

Thus, the desired result follows.
  \end{proof}
From here on, let $C:=\LieAB = AB-BA$. We now give another presentation for $\rq$ that involves $C$.
\begin{proposition} \label{theof4.4}
 The algebra $\rq$ has a presentation by generators $A, \, B, \, C, \, \g$ and relations
\begin{eqnarray}
(1-q)AB &=& \g - qC,    \label{eq35}\\
(1-q)BA &=& \g - C,     \label{eq36}\\
AC &=& qCA,                  \label{eq37}\\   
CB &=& qBC,                  \label{eq38}\\
A\g &=& \g A,               \label{eq39}\\
B\g &=& \g B,             \label{eq40}\\
C\g &=& \g C.             \label{eq41}
\end{eqnarray}
\end{proposition}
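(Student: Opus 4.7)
The plan is to prove that the algebra $\mathcal{A}$ presented by generators $A, B, C, \g$ and relations \eqref{eq35}--\eqref{eq41} is isomorphic to $\rq$, by constructing mutually inverse algebra homomorphisms.

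\emph{Forward map.} Define $\phi \colon \mathcal{A} \to \rq$ on generators by $A \mapsto A$, $B \mapsto B$, $C \mapsto [A,B]$, and $\g \mapsto AB - qBA$. To see that $\phi$ is well-defined, I would verify each of the seven relations in $\rq$. Relations \eqref{eq35} and \eqref{eq36} reduce, after substituting the images of $C$ and $\g$, to the trivial algebraic identities $(1-q)AB = (AB - qBA) - q(AB - BA)$ and $(1-q)BA = (AB - qBA) - (AB - BA)$. Relations \eqref{eq37} and \eqref{eq38} are exactly Lemma~\ref{Lemf4.1}, while \eqref{eq39}--\eqref{eq41} follow from the centrality of $\g$ in $\rq$ established in Lemma~\ref{lemfi3.1}.

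\emph{Reverse map.} Define $\psi \colon \rq \to \mathcal{A}$ by $A \mapsto A$, $B \mapsto B$. For well-definedness, I must verify that $AB - qBA$ is central in $\mathcal{A}$, and the strategy is to show it in fact coincides with the generator $\g$. The key manipulation is: take \eqref{eq35} minus $q$ times \eqref{eq36} to obtain $(1-q)(AB - qBA) = (1-q)\g$, and take \eqref{eq35} minus \eqref{eq36} to obtain $(1-q)(AB - BA) = (1-q)C$. These yield $AB - qBA = \g$ and $[A,B] = C$ inside $\mathcal{A}$, after which centrality of $AB - qBA$ follows immediately from \eqref{eq39} and \eqref{eq40}.

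\emph{Mutual inverseness.} The composition $\phi \circ \psi$ is the identity on the generators $A, B$ of $\rq$ and hence on all of $\rq$. For $\psi \circ \phi$, the assignment is the identity on $A, B$ by construction, and on the remaining two generators one has $\psi \circ \phi(C) = [A,B] = C$ and $\psi \circ \phi(\g) = AB - qBA = \g$ by the two identities derived above. The main obstacle is the degenerate case $q = 1$: relations \eqref{eq35} and \eqref{eq36} then both collapse to $\g = C$, so the factors $(1-q)$ in the key manipulation vanish and the identifications $AB - qBA = \g$ and $[A,B] = C$ no longer follow from the presentation alone. Handling this case would require an auxiliary argument, or else the tacit restriction $q \neq 1$, which is the generic regime of interest.
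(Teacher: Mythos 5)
Your proof is correct and is essentially the rigorous form of the paper's own argument: the paper likewise checks that \eqref{eq35}--\eqref{eq41} hold in $\rq$ and that the defining relations of $\rq$ can be recovered from them, though it does not spell out the two mutually inverse homomorphisms. The one place where you go beyond the paper is the explicit recovery of $\g = AB-qBA$ and $C=[A,B]$ inside the presented algebra (by subtracting $q$ times \eqref{eq36} from \eqref{eq35}, and \eqref{eq36} from \eqref{eq35}), which makes visible the division by $1-q$; your observation that the argument breaks down at $q=1$ is correct and is a genuine point, since for $q=1$ relations \eqref{eq35} and \eqref{eq36} collapse to $\g=C$ and the presented algebra becomes the free algebra on $A,B$ with an adjoined central generator, strictly larger than $\mathcal{R}(1)$. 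The hypothesis $q\neq 1$ is omitted from the statement of this proposition in the paper, although it is imposed in the basis theorem that follows and throughout Section~\ref{sec5}, so your flag identifies a real (if ultimately harmless) gap in the paper's formulation rather than in your own proof.
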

\begin{proof}
We obtain equation \eqref{eq35} using \eqref{eq14} and $\lbrack A,B\rbrack = AB - BA$. 
 For equation \eqref{eq36}, we only simplify the equivalent expression of the product $qC$ using \eqref{eq14}.
 Equations \eqref{eq37} and \eqref{eq38} follow from Lemma~\ref{Lemf4.1}. Relations \eqref{eq39}, \eqref{eq40} and \eqref{eq41} follow from the assertion that $\g$ is central in $\rq$.
 
 Moreover, using the fact that we defined $C:=\LieAB = AB-BA$, we can recover \eqref{eq4} after replacing $C$ by $AB-BA$ in either \eqref{eq35} or \eqref{eq36}. Also if we replace $C$ by $\LieAB$ in \eqref{eq37} and \eqref{eq38}, the two equations will still turn out to be relations satisfied by $A,B$. See Lemma~\ref{Lemf4.1}. Furthermore, \eqref{eq39} to  \eqref{eq41} would imply that $AB-qBA$ is central. 
\end{proof}

We use the presentation for $\rq$ given in Proposition~\ref{theof4.4} to determine a basis for $\rq$ in four generators $A, \, B, \, C,\, \g$ using the Diamond Lemma.
\begin{theorem}
If $q \neq 0$ and $q \neq 1$, then the vectors
\begin{equation}
    \g ^ h C ^k B^l  , \, \, \g ^h C^k A^t, \quad (h, \, k, \, l \in\N; \ t \in \Z^+) ,    \label{eq44}
\end{equation}
form a basis for $\rq$.
\end{theorem}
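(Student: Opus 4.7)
The plan is to apply the Diamond Lemma to the presentation in Proposition~\ref{theof4.4}, mirroring the argument used earlier for the three-generator presentation. Using the hypotheses $q\neq 0$ and $q\neq 1$, I would form the reduction system
\begin{align*}
S = \bigl\{\, & (AB,\,\tfrac{1}{1-q}\g-\tfrac{q}{1-q}C),\ (BA,\,\tfrac{1}{1-q}\g-\tfrac{1}{1-q}C),\ (AC,\,qCA),\\
& (BC,\,q^{-1}CB),\ (A\g,\,\g A),\ (B\g,\,\g B),\ (C\g,\,\g C)\,\bigr\}.
\end{align*}
The first two reductions come from \eqref{eq35}--\eqref{eq36} and require $q\neq 1$; the fourth rewrites \eqref{eq38} as $BC=q^{-1}CB$ and requires $q\neq 0$; the remaining three are \eqref{eq39}--\eqref{eq41}.

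To apply the lemma, I equip the free $\F$-algebra on $\{A,B,C,\g\}$ with the degree-lexicographic semigroup order having $A>B>C>\g$. The length-$2$ reductions \eqref{eq35}--\eqref{eq36} send $AB$ and $BA$ to linear combinations of length-$1$ monomials, and each other reduction $XY\to cYX$ satisfies $XY>YX$ lexicographically because $X>Y$; hence every right-hand side is a combination of monomials strictly below its left-hand side, so $S$ is compatible with the order.

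Next I would characterize the irreducible words. A word $W$ is irreducible iff it has no subword in $\{AB,BA,AC,BC,A\g,B\g,C\g\}$, and this forces three successive constraints: no letter can precede $\g$, so every $\g$ sits in an initial block $\g^h$; in the remaining suffix only $C$ can precede $C$, so the next block is $C^k$; and what follows is a word in $A,B$ avoiding both $AB$ and $BA$, hence a pure power $A^t$ or $B^l$. Adopting $l\in\N$ for the $B$-case and $t\in\Z^+$ for the $A$-case prevents double counting the common element $\g^hC^k$, so the set of irreducible monomials matches the list \eqref{eq44} exactly.

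It remains to verify that the ambiguities of $S$ are resolvable. Every left-hand side has length $2$, so no inclusion ambiguities arise, and the overlap ambiguities are the length-$3$ words whose first two and last two letters are each a left-hand side in $S$: these are $ABA$, $ABC$, $AB\g$, $BAB$, $BAC$, $BA\g$, $AC\g$, and $BC\g$. This case analysis will be the main obstacle; each one is settled by reducing in both available ways and observing that the resulting normal forms coincide, with some care demanded on the coefficients $\tfrac{1}{1-q}$, $q$, $q^{-1}$. For instance, on $ABA$ the path $(AB)A$ gives $\tfrac{1}{1-q}\g A-\tfrac{q}{1-q}CA$ directly, while $A(BA)$ gives $\tfrac{1}{1-q}A\g-\tfrac{1}{1-q}AC$, which after applying $A\g\to\g A$ and $AC\to qCA$ lands on the same expression; the other seven overlaps are handled by analogous computations. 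The Diamond Lemma then declares \eqref{eq44} a basis for $\rq$.
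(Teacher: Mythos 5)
Your proposal is correct and follows essentially the same route as the paper: the same seven-element reduction system built from Proposition~\ref{theof4.4}, the same eight overlap ambiguities $ABA$, $ABC$, $AB\g$, $BAB$, $BAC$, $BA\g$, $AC\g$, $BC\g$, and the same identification of the irreducible words with \eqref{eq44}. Your explicit degree-lexicographic order with $A>B>C>\g$ is a welcome detail the paper leaves implicit, and the seven overlaps you defer to ``analogous computations'' are indeed the routine verifications the paper records in \eqref{relf4.34}--\eqref{relf4.41}.
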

\begin{proof}
 We also invoke Bergman's Diamond Lemma to prove this theorem. Using the defining relations of $\rq$ stated in Proposition~\ref{theof4.4}, we can form a reduction system $R$ consisting precisely of the following elements: 
\begin{eqnarray}
\S1_1 &=& \bigg(AB,\frac{\g - qC}{1-q} \bigg), \label{eq46}\\
\S1_2 &=& \bigg(BA, \frac{\g - C}{1-q}\bigg), \label{eq45}\\
\S1_3 &=& (AC,qCA), \label{eq47}\\
\S1_4 &=& (BC,\frac{CB}{q}), \label{eq48}\\
\S1_5 &=& (A\g, \g A), \label{eq49}\\
\S1_6 &=&(B\g, \g B), \label{eq50}\\
\S1_7 &=&(C\g, \g C). \label{eq51}
\end{eqnarray}

With the reduction system $R$, there is no inclusion ambiguity and that the words 
\begin{equation}
    ABA, \ ABC, \ AB\g, \ BAB, \ BAC, \ BA\g, \ AC\g, \ \mbox{and} \ BC\g \label{eq52}
\end{equation}
are precisely the nontrivial words that determine all the overlap ambiguities of the reduction system $R$.
In the following, we show the compositions of reductions that prove that all the ambiguities determined by \eqref{eq52} are resolvable.
\begin{eqnarray}
f_{\S1_1}A &=&\frac{\g A- qCA}{1-q}= r_{I\S1_3I} \circ r_{I\S1_5I}(Af_{\S1_2}), \label{relf4.34}\\
f_{\S1_1}C &=&\frac{\g C- qC^2}{1-q} = r_{I\S1_7I} \circ r_{C\S1_1 I} \circ r_{I\S1_3B}(A f_{\S1_4}),\label{idred} \\
r_{I\S1_7I} (f_{\S1_1} \g)&=&\frac{\g^2 - q\g C}{1-q}=r_{\g \S1_1I} \circ r_{I\S1_5B}(A f_{\S1_6}), \\ 
r_{I\S1_4I} (f_{\S1_2} B) &=& \frac{\g B- qBC}{1-q} =
r_{I\S1_4I} \circ r_{I\S1_6I}( B f_{\S1_1}),\\
f_{\S1_2} C&=&\frac{\g  C - C^2}{1-q}= 
r_{I \S1_7I} \circ r_{C\S1_2I} \circ r_{I\S1_4A}(B f_{\S1_3}),\label{idred2} \\
 r_{I\S1_7I}(f_{\S1_2} \g)&=& \frac{\g^2 -\g C}{1-q} =
r_{ \g \S1_2 I} \circ r_{I \S1_6A} ( B f_{\S1_5}), \\
 r_{I\S1_7A}\circ r_{C \S1_5I}(f_{\S1_3} \g)  &=& q\g C A=
r_{\g \S1_3 I} \circ r_{I\S1_5C}(Af_{\S1_7}), \\ 
r_{I\S1_7B} \circ r_{C\S1_6I}(f_{\S1_4} \g) &=&\frac{ \g CB}{q}=
r_{\g \S1_4I} \circ r_{I \S1_6C}(Bf_{\S1_7}).
\label{relf4.41}
\end{eqnarray}
The relations 
\eqref{relf4.34} to \eqref{relf4.41}
can be proven by routine computations. For the relations \eqref{relf4.34}, \eqref{idred}, \eqref{idred2}, the composition of reductions on the left-hand side can be taken as the identity linear map, which can be interpreted as an empty composition of reductions with respect to the reduction system $R$.
Hence, by the Diamond Lemma, the set of all irreducible words generated by $A, B, C, \g$ with respect to the reduction system $R$ forms a basis for $\rq$. We show that this basis is equal to the set 
\begin{equation}
    \{ \g ^ h C^k B^l ,\, \, \g ^h C^k A^t :\ \ h, \, k, \, l \in\N; \ t \in \Z^+ \}.    \label{eq53}
\end{equation}

Base on respective first components of each ordered pair in $R$, it  is clear that elements in \eqref{eq53} are irreducible with respect to the reduction system $R$. 

Now, suppose $W$ is not in \eqref{eq53}. Then $W$ has a subword of the form $B^x C^y  \g^z$ or $A^x C^y \g^z$  where $x,\, y,\, z \in\N$ and at most one of these powers is equal to zero. We have these cases to consider: when  $x=0,\, y=0, \, z=0$, and if $x,\, y,\, z \in \Z^+$. If $x=0$, then a reduction which involves $\S1_7$ would act nontrivially on  $B^x C^y  \g^z = C^y \g^z$,  while a reduction which involves $\S1_7$ would act nontrivially on  $A^x C^y \g^z =C^y \g^z$. For $y=0$, a reduction which involves $\S1_6$ would act nontrivially on  $B^x C^y  \g^z = B^x \g^z$,  while a reduction which involves $\S1_5$ would act nontrivially on  $A^x C^y \g^z =A^x \g^z$. And if $z=0$, a reduction which involves $\S1_4$ would act nontrivially on  $B^x C^y  \g^z = B^x C^y$, while a reduction which involves $\S1_3$ would act nontrivially on  $A^x C^y \g^z =A^x C^y$.
For the case $x, y, z \in\Z^+$, reductions which involves $\S1_4$ and  $\S1_7$ would act nontrivially on  $B^x C^y  \g^z $,  while reductions which involves $\S1_3$ and  $\S1_7$ would act nontrivially on  $A^x C^y \g^z$.

It is clear that in any of these cases, $W$ is not irreducible. Hence, any element in \eqref{eq53} is irreducible with respect to the reduction system $R$. Thus, the result follows.
\end{proof}
%%%%%%%%%%%%%%%%%%%%%%%%%%%%%%%%%%%%%%%%%%%%%%%%%%%%%%%%%%%%%%%%%%%%%%%%%%%%%%%
\section{Lie polynomials in $\rq$}\label{sec5}

From this point onward, we assume that $q$ is nonzero and is not a root of unity. Let $\freeLie$ denote the Lie subalgebra of $\rq$ generated by $A \mbox{ and }B$.
 Fix an element $X \in \freeLie$. We recall the linear map $\ad X$ that sends $Y \mapsto [X,Y]$ for any $Y \in \freeLie$. Also, in addition to relations on $q$-special combinatorics from [7, Appendix C] which were  enumerated on  section 3, we will also use the following:

 For a given $n, \, r, \, k \in \N$, and $z \in \F$,
 \begin{eqnarray}
     (1-z)\{n\}_z &=& 1-z^{n}, \label{eq54}\\
       \{n+k\}_z &=& z^k \{n\}_z + \{k\}_z. \label{eq55}
 \end{eqnarray}
In addition,
\begin{eqnarray}
\{rn\}_z &=& \{n\}_z\{r\}_{z^n},  \label{eqf12} 
\end{eqnarray}
\begin{eqnarray} \label{eq271} 
\{n\}_{z^k} &=& \sum_{l=0}^{n-1} (z^k)^l.
\end{eqnarray}

%%%%%%%%%%%%%%%%%%%%%%%%%%%%%

Our main goal for this section is to determine a basis for $\freeLie$. To accomplish the said goal, we first exhibit some important elements of $\freeLie$.

\bigskip
\begin{lemma} \label{lemmaf5.1}
For any $n \in \N$, 
\begin{eqnarray}
((-\ad A)^n) (C)&=& (1-q)^n CA^n,   \label{eq56}\\
((\ad B)^n) (C) &=& (1-q)^n B^n C,    \label{eq57}\\
((-\ad C)^n) (B)&=& (1-q)^n BC^n, \quad \emph{and}   \label{eq58}\\
 ((\ad C)^n) (A)&=&(1-q)^n C^n A.    \label{eq59} 
\end{eqnarray}
\end{lemma}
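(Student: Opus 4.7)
The plan is to prove all four identities by induction on $n$, with the key inputs being the $q$-commutation relations $AC = qCA$ and $CB = qBC$ (equations~\eqref{eq37} and~\eqref{eq38}, established in Lemma~\ref{Lemf4.1}). Each of the four statements has exactly the same flavor, differing only in which derivation is being iterated and on which side the relevant power sits; so the real content is one induction, and the other three are obtained by a bookkeeping change of letters and signs.

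For the base case $n=0$ every identity is trivial. For $n=1$, a direct computation suffices: for \eqref{eq56}, $(-\ad A)(C)=-(AC-CA)=-(qCA-CA)=(1-q)CA$ by \eqref{eq37}; \eqref{eq57} uses $CB=qBC$ to give $(\ad B)(C)=BC-CB=(1-q)BC$; \eqref{eq58} uses the same relation to give $(-\ad C)(B)=BC-CB=(1-q)BC$; and \eqref{eq59} uses $AC=qCA$ to give $(\ad C)(A)=CA-AC=(1-q)CA$.

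For the inductive step on \eqref{eq56}, assume the identity holds for $n=k$. Then
\begin{eqnarray*}
(-\ad A)\bigl((1-q)^k CA^k\bigr)
&=& (1-q)^k\bigl(CA^{k+1}-ACA^k\bigr)\\
&=& (1-q)^k\bigl(CA^{k+1}-qCA^{k+1}\bigr)\\
&=& (1-q)^{k+1}CA^{k+1},
\end{eqnarray*}
where in the middle step I have pushed the leading $A$ past $C$ using \eqref{eq37}. The inductions for \eqref{eq57}, \eqref{eq58}, \eqref{eq59} are word-for-word identical, using \eqref{eq38} in place of \eqref{eq37} where appropriate, and being careful with the sign in the $-\ad$ cases.

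I expect no genuine obstacle here. The mechanism is that $C$ $q$-commutes with each of $A$ and $B$, so every application of the appropriate derivation produces one factor of $(1-q)$ together with one extra generator, and the power of $C$ stays consolidated on the correct side. The only place requiring minor care is matching the sign conventions so that the $-\ad A$ and $-\ad C$ cases produce the same $(1-q)$ factor (rather than $(q-1)$) as the $\ad B$ and $\ad C$ cases.
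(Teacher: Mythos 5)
Your proposal is correct and follows essentially the same route as the paper: induction on $n$ using the $q$-commutation relations $AC=qCA$ and $CB=qBC$ from Lemma~\ref{Lemf4.1} (via Proposition~\ref{theof4.4}), with the paper merely leaving as ``routine computations'' the inductive steps you write out explicitly. All four computations and the sign bookkeeping check out.
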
 
\begin{proof}
 We prove equations \eqref{eq56} to \eqref{eq59} by mathematical induction. It can be easily shown that equations \eqref{eq56} to \eqref{eq59} hold for $n=0$.
 
 Suppose \eqref{eq56} and \eqref{eq57} hold for some $j \in \N$. By some routine computations, we can easily obtain 
 \begin{eqnarray}
 ((-\ad A)^{j+1})(C)&=& (1-q)^{j+1}CA^{j+1} \quad \mbox{and}\\
 ((\ad B)^{k+1})(C)&=& (1-q)^{k+1}B^{k+1}C.
 \end{eqnarray}

These suggest that \eqref{eq56} and \eqref{eq57} hold for any natural number $n$.

The proof of \eqref{eq58} is similar to the proof for \eqref{eq56} while the proof of \eqref{eq59} is similar to the proof for \eqref{eq57}. 
\end{proof}

\begin{proposition} \label{Propo5.2}
For any $n, \, m \in \N$
\begin{eqnarray}
((\ad C)^m)(CA^n)&=& (1-q^n)^m C^{m+1} A^n, \quad \emph{and} \label{eq60}\\
((-\ad C)^m)(B^n C)&=& (1-q^n)^m B^n C^{m+1}.  \label{eq61}
\end{eqnarray}
Moreover, the following are elements of $\freeLie$:
  \begin{equation}
  A, \ \ B,\ \ C, \ \ C^{m+1}A^n , \quad B^n C^{m+1} \quad \  (n, \, m \in \N).       \label{eq62}
 \end{equation}  
\end{proposition}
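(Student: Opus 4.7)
The plan is first to establish the two identities (60) and (61) by induction on $m$, and then to deduce the membership statement as an algebraic consequence. For (60), the base case $m=0$ is trivial. For the inductive step I would use the commutation rule $AC = qCA$ from Proposition~\ref{theof4.4}, iterated to give $A^n C = q^n C A^n$, and compute
\begin{equation*}
(\ad C)(C^{m+1}A^n) = C^{m+2}A^n - C^{m+1}(A^n C) = (1-q^n)\,C^{m+2}A^n,
\end{equation*}
from which the factor $(1-q^n)^{m+1}$ emerges after applying the inductive hypothesis. The argument for (61) runs in complete parallel, using $CB^n = q^n B^n C$ (a consequence of $CB = qBC$) in place of the corresponding formula for $A$.

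For the membership statement, I would first note that $A, B \in \freeLie$ by definition and $C = [A,B] \in \freeLie$. Lemma~\ref{lemmaf5.1} then gives
\begin{equation*}
CA^n = \frac{1}{(1-q)^n}(-\ad A)^n(C), \qquad B^n C = \frac{1}{(1-q)^n}(\ad B)^n(C),
\end{equation*}
so both $CA^n$ and $B^n C$ lie in $\freeLie$. Applying $\ad C$ (which preserves $\freeLie$ since $C \in \freeLie$) and invoking (60)-(61) just proved, we can solve for
\begin{equation*}
C^{m+1}A^n = \frac{1}{(1-q^n)^m}(\ad C)^m(CA^n), \qquad B^n C^{m+1} = \frac{1}{(1-q^n)^m}(-\ad C)^m(B^n C),
\end{equation*}
placing the desired elements inside $\freeLie$.

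The main obstacle is the legality of the two divisions: the denominator $(1-q)^n$ is nonzero under the standing assumption that $q$ is nonzero and not a root of unity, and $(1-q^n)^m$ is likewise nonzero for $n \in \Z^+$ by the same assumption. The degenerate case $n=0$ simply returns $C$, which is already known to lie in $\freeLie$, so no formula is ever divided by zero.
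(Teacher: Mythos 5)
Your proposal follows essentially the same route as the paper's: prove \eqref{eq60} and \eqref{eq61} by induction on $m$ using $A^nC=q^nCA^n$ and $CB^n=q^nB^nC$, then obtain membership by combining Lemma~\ref{lemmaf5.1} with the two identities and dividing by $(1-q)^n$ and $(1-q^n)^m$; you are in fact more explicit than the paper about why those divisions are legal. One caveat: your dismissal of the degenerate case is not quite right. When $n=0$ and $m\ge 1$ the element in question is $C^{m+1}$, not $C$, and since $(1-q^0)^m=0$ the identity \eqref{eq60} collapses to $0=0$ and gives no access to $C^{m+1}$. This loose end traces back to the proposition's own statement rather than to your argument --- the paper's proof glosses over the same point, all subsequent results take $n\in\Z^+$, and the final basis theorem shows that only the combination $\{n+1\}_q\,\g^hC^{n+1}-\{n\}_q\,\g^{h+1}C^n$ (not the pure power $C^{m+1}$) lies in $\freeLie$ --- but you should either restrict the membership claim to $n\ge 1$ or note that the $n=0$, $m\ge 1$ case is not covered.
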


\begin{proof}
 We first prove \eqref{eq60} and \eqref{eq61} by induction on $m$. It is routine to show that equations \eqref{eq60} and \eqref{eq61} for  hold for $m=0$.
 
Now we assume that \eqref{eq60} and \eqref{eq61} are true for some $k \in \N$.
Using the assumption, we can easily derive
\begin{eqnarray}
\nonumber ((\ad C)^{k+1})(CA^n)&=& (1-q^n)^{k+1}C^{k+2} A^n \quad \mbox{and} \\
\nonumber ((-\ad C)^{k+1})(B^n C)&=& (1-q^n)^{k+1}B^n C^{k+2}.
\end{eqnarray}
These imply that \eqref{eq60} and \eqref{eq61} hold for any natural number $m$ . 

Further, the Lie subalgebra  $\freeLie$ is closed under lie bracket operation and has generators $A,\ B$. This would clearly imply that  $A, \, B, \, C \in \freeLie$.

Moreover, the claim that $C^{m+1}A^n, \, B^n C^{m+1} \in \freeLie$ where $m,n \in \N$ follows from \eqref{eq60} and \eqref{eq61}.
\end{proof}

\begin{proposition} \label{prop5.3}
For any $n \in \N$
\begin{equation}
  q^n ( (\ad B) \circ (\ad C)^n ) (A) =   (1-q)^{n}\{n\}_q \g C^n -(1-q)^{n}\{n+1\}_q C^{n+1}
. \label{eq63}
\end{equation}
\end{proposition}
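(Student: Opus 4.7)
The plan is to compute the left-hand side directly by first resolving the inner $(\ad C)^n$ using Lemma~\ref{lemmaf5.1}, then applying $\ad B$ and reducing the resulting expression to normal form using the relations of Proposition~\ref{theof4.4}. I expect no genuinely hard step; the difficulty is bookkeeping with the $q$-combinatorial identities.

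First, by equation \eqref{eq59}, $(\ad C)^n (A) = (1-q)^n C^n A$. Then
\begin{eqnarray*}
q^n ((\ad B)\circ (\ad C)^n)(A) &=& q^n (1-q)^n [B, C^n A] \\
&=& q^n (1-q)^n (B C^n A - C^n A B).
\end{eqnarray*}
Next, I would iterate \eqref{eq38} to obtain $B C^n = q^{-n} C^n B$ (a short induction on $n$, since $CB = qBC$ gives $BC = q^{-1}CB$). This turns the bracket into
\begin{eqnarray*}
B C^n A - C^n A B &=& q^{-n} C^n (BA) - C^n (AB).
\end{eqnarray*}

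Now I would substitute the reordering formulae \eqref{eq35} and \eqref{eq36}, namely $AB = (\g - qC)/(1-q)$ and $BA = (\g - C)/(1-q)$, to get
\begin{eqnarray*}
q^n (1-q)^n [B, C^n A] &=& q^n (1-q)^{n-1}\bigl[q^{-n}(C^n \g - C^{n+1}) - (C^n \g - q C^{n+1})\bigr] \\
&=& (1-q)^{n-1}\bigl[(1 - q^n)\, C^n \g - (1 - q^{n+1})\, C^{n+1}\bigr].
\end{eqnarray*}
Here I have used that $\g$ is central (relation \eqref{eq41}, so $C^n \g = \g C^n$).

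Finally I would convert the scalars using the identity \eqref{eq54}, $(1-q)\{n\}_q = 1 - q^n$, so that $(1-q)^{n-1}(1-q^n) = (1-q)^n \{n\}_q$ and similarly $(1-q)^{n-1}(1-q^{n+1}) = (1-q)^n \{n+1\}_q$. Substituting and using centrality of $\g$ one last time to place it on the left yields the claimed expression
\[
(1-q)^n \{n\}_q \g C^n - (1-q)^n \{n+1\}_q C^{n+1}.
\]
The only place requiring care is the sign and the power of $q^{-n}$ that enters when commuting $B$ past $C^n$; once that intermediate identity is recorded, the rest is algebraic manipulation with the two scalar identities above.
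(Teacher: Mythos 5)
Your proposal is correct and follows essentially the same route as the paper: resolve $((\ad C)^n)(A)=(1-q)^nC^nA$ via Lemma~\ref{lemmaf5.1}, commute $B$ past $C^n$ using $CB=qBC$, and substitute the reordering formulae $(1-q)AB=\g-qC$ and $(1-q)BA=\g-C$ before simplifying with $(1-q)\{n\}_q=1-q^n$. The only cosmetic difference is that the paper wraps this same computation in an induction on $n$ whose inductive hypothesis is never actually invoked, so your direct version is, if anything, cleaner.
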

 \begin{proof}
  We use induction on $n$. Equation \eqref{eq63} is trivially satisfied when $n=0$.
  
 We suppose that for some $k \in \N$, \eqref{eq63} holds.
  Observe that 
  \begin{eqnarray}
  \nonumber q^{k+1} ( (\ad B) \circ (\ad C)^{k+1}) (A) &=&  q^{k+1}( (\ad B)((\ad C)^{k+1})(A)), \\
  \nonumber &=& q^{k+1}(\ad B)((1-q)^{k+1}C^{k+1} A), \\
  \nonumber &=& q^{k+1}(1-q)^{k+1}[B,C^{k+1}A], \\
  \nonumber &=& (1-q)^{k+1}(q^{k+1} BC^{k+1}A - q^{k+1} C^{k+1}AB), \\
  \nonumber &=& (1-q)^{k+1}(C^{k+1}BA - q^{k+1} C^{k+1}AB), \\
  \nonumber &=& (1-q)^k(C^{k+1}(1-q)BA - q^{k+1} C^{k+1}(1-q)AB).
  \end{eqnarray}
  
  We replace $(1-q)AB$ and $(1-q)BA$ by the right-hand side expression of \eqref{eq35} and \eqref{eq36} respectively then simplify the result using \eqref{eq54}:
  \begin{eqnarray}
  \nonumber q^{k+1}( (\ad B) \circ (\ad C)^{k+1}) (A)&=& (1-q)^{k+1}\left(\{k+1\}_q \g C^{k+1} - \{k+2\} C^{k+2}\right),   \label{eq65}
  \end{eqnarray}
 and the result follows.
 \end{proof}
 
\begin{lemma}  \label{lem5.4}
For any $n \in \Z^+$, \\
\vspace{-1.5em} $$\g C^n A, \, \, \g BC^n \in \freeLie.$$
\end{lemma}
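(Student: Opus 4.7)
The plan is to extract $\g C^n A$ and $\g B C^n$ from the combination of $\g C^n$ and $C^{n+1}$ that Proposition~\ref{prop5.3} already places in $\freeLie$, by applying $\ad A$ and $\ad B$ respectively and using Proposition~\ref{Propo5.2} to absorb the leftover terms. Concretely, set
\begin{equation*}
X_n := (1-q)^n\{n\}_q\, \g C^n - (1-q)^n\{n+1\}_q\, C^{n+1},
\end{equation*}
which lies in $\freeLie$ by Proposition~\ref{prop5.3}. Then examine $(\ad A)(X_n)$ and $(\ad B)(X_n)$, both of which remain in $\freeLie$ since $A, B \in \freeLie$.

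For the first claim, iterating~\eqref{eq37} gives $AC^n = q^n C^n A$, so $[A, C^n] = (q^n-1) C^n A = -(1-q)\{n\}_q C^n A$ by~\eqref{eq54}. Since $\g$ is central by~\eqref{eq39}, this yields $(\ad A)(\g C^n) = -(1-q)\{n\}_q\, \g C^n A$ and $(\ad A)(C^{n+1}) = -(1-q)\{n+1\}_q\, C^{n+1} A$, so
\begin{equation*}
(\ad A)(X_n) = -(1-q)^{n+1}\{n\}_q^{\,2}\, \g C^n A + (1-q)^{n+1}\{n+1\}_q^{\,2}\, C^{n+1} A.
\end{equation*}
The element $C^{n+1}A$ lies in $\freeLie$ by Proposition~\ref{Propo5.2}, and the coefficient $-(1-q)^{n+1}\{n\}_q^{\,2}$ is nonzero because $q \neq 1$ and $\{n\}_q \neq 0$ for $n \in \Z^+$ (as $q$ is not a root of unity). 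Solving for $\g C^n A$ therefore places it in $\freeLie$.

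The argument for $\g B C^n$ is parallel: iterating~\eqref{eq38} gives $C^n B = q^n B C^n$, whence $[B, C^n] = (1-q)\{n\}_q B C^n$, and consequently $(\ad B)(\g C^n) = (1-q)\{n\}_q\, \g B C^n$ together with $(\ad B)(C^{n+1}) = (1-q)\{n+1\}_q\, B C^{n+1}$. Substituting into $(\ad B)(X_n)$ yields a nonzero scalar multiple of $\g B C^n$ plus a scalar multiple of $B C^{n+1}$, the latter being in $\freeLie$ by Proposition~\ref{Propo5.2}; solving gives $\g B C^n \in \freeLie$. All intermediate steps are routine applications of the commutation and centrality relations~\eqref{eq37}--\eqref{eq41} together with~\eqref{eq54}. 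No deep obstacle arises; the only point demanding care is the non-vanishing of the scalar coefficients that must be inverted, which is exactly where the standing hypothesis that $q$ is nonzero and not a root of unity is used.
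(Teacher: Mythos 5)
Your proposal is correct and is essentially the paper's own argument: the paper also starts from the element of Proposition~\ref{prop5.3}, brackets it with $A$ (resp.\ $B$), uses the $q$-commutation of $A$ (resp.\ $B$) past $C^n$ and the centrality of $\g$ to collect a nonzero multiple of $\g C^n A$ (resp.\ $\g B C^n$) plus a term already known to lie in $\freeLie$ by Proposition~\ref{Propo5.2}, and then solves. The only differences are cosmetic (you write $[A,X_n]$ where the paper writes $[X_n,A]$), and you make explicit the non-vanishing of $(1-q)^{n+1}\{n\}_q^2$, which the paper leaves implicit.
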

\begin{proof}
From Proposition~\ref{prop5.3}, for all $n\in\N$, we have
\begin{eqnarray}
\nonumber q^n ((\ad B)\circ (\ad C)^n )(A)=(1-q)^n \{n\}_q \g C^n-(1-q)^n \{n+1\}_q C^{n+1}		
\end{eqnarray}
Let $L_2:=[q^n ((\ad B)\circ(\ad C)^n )(A),A]$. Now, observe
\begin{eqnarray}
\nonumber L_2 &=& [(1-q)^n \{n\}_q \g C^n-(1-q)^n \{n+1\}_q C^{n+1},A] \label{rel5.14} \\
\nonumber &=&  (1-q)^n\{n\}_q \g C^n A-(1-q)^n\{n+1\}_q C^{n+1} A \\
\nonumber & &-(1-q)^n\{n\}_q A\g C^n + (1-q)^n\{n+1\}_q AC^{n+1}, \\
\nonumber &=&  (1-q)^n\{n\}_q \g C^n A-(1-q)^n\{n+1\}_q C^{n+1} A \\
\nonumber & &-(1-q)^nq^n \{n\}_q \g C^n A+(1-q)^nq^{n+1} \{n+1\}_q C^{n+1} A, \\
\nonumber &=&  (1-q)^n(1- q^n ) \{n\}_q \g C^n A-(1-q)^n(1-q^{n+1} ) \{n+1\}_q C^{n+1} A , \\
\nonumber &=& (1-q)^{n+1} \{n\}_q \{n\}_q \g C^n A-(1-q)^{n+1} \{n+1\}_q \{n+1\}_q C^{n+1} A.  \label{rel5.15}
\end{eqnarray}
Hence, we have
\begin{eqnarray} \label{equaf5.17}
\nonumber(1-q)^{n+1} \{n\}_q \{n\}_q \g C^n A &=&(1-q)^{n+1} \{n+1\}_q \{n+1\}_q C^{n+1} A+L_2. 
\end{eqnarray}
which implies that $\g C^n A \in \freeLie$. 

It can be shown that $\g BC^n \in \freeLie$ in a similar manner.
\end{proof}
 We define elements $L_a, \, L_b \in \rq$ in the following manner:
 \begin{eqnarray}
 \nonumber L_a &=& q^n((-\ad A)^m \circ (\ad B)\circ (\ad C)^n )(A), \\
 \nonumber L_b &=& q^n ((-\ad B)^m \circ (\ad B)\circ (\ad C)^n )(A).
 \end{eqnarray}
\begin{proposition} \label{propof5.5} For any natural number $m$ and $n$,
\begin{eqnarray}
L_a &=& (1-q)^{n+m}\left( \{n\}_q^{m+1} \g C^n A^m- \{n+1\}_q^{m+1} C^{n+1} A^m \right) , \quad \emph{and} \label{eq5.16} \\
L_b&=& (1-q)^{n+m} \left(\{n+1\}_q^{m+1} B^m C^{n+1} - \{n\}_q^{m+1} \g B^m C^n\right). \label{eq5.17} 
\end{eqnarray}

\end{proposition}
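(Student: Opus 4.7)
\textbf{The plan} is to prove both identities by induction on $m$, with $n$ held fixed but arbitrary. The two arguments are formally parallel: at each step, applying one additional derivation ($-\ad A$ in the case of $L_a$, $-\ad B$ in the case of $L_b$) to the expression at level $m$ should produce the expression at level $m+1$. The base case $m=0$ is exactly the content of Proposition~\ref{prop5.3}: in both cases the defining expression reduces to $q^n((\ad B)\circ(\ad C)^n)(A)$, which by Proposition~\ref{prop5.3} equals $(1-q)^n\{n\}_q\gamma C^n - (1-q)^n\{n+1\}_q C^{n+1}$, matching the proposed formulas at $m=0$.

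For the inductive step of \eqref{eq5.16}, assume \eqref{eq5.16} at level $m$ and apply $-\ad A$ to both sides. The necessary commutations are centrality of $\gamma$ from \eqref{eq39} and $AC = qCA$ from \eqref{eq37}; iterating the latter gives $AC^n = q^nC^n A$, so that
\[
[A,\gamma C^nA^m] = -(1-q^n)\gamma C^nA^{m+1}, \qquad [A,C^{n+1}A^m] = -(1-q^{n+1})C^{n+1}A^{m+1}.
\]
Thus $-\ad A$ multiplies the $\gamma C^n A^m$ summand by $(1-q^n)$ and the $C^{n+1}A^m$ summand by $(1-q^{n+1})$, with one extra factor of $A$ on the right in each. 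Converting $1-q^k$ to $(1-q)\{k\}_q$ via \eqref{eq54}, the leading power of $(1-q)$ is promoted from $n+m$ to $n+m+1$ and the $q$-number exponents rise from $m+1$ to $m+2$, delivering \eqref{eq5.16} at level $m+1$.

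The inductive step of \eqref{eq5.17} runs along analogous lines with the roles of $A$ and $B$ interchanged, now using the centrality of $\gamma$ with $B$ from \eqref{eq40} and $CB = qBC$ from \eqref{eq38}; the latter iterates to $C^n B = q^n BC^n$, and so
\[
[B,\gamma B^m C^n] = (1-q^n)\gamma B^{m+1}C^n, \qquad [B,B^m C^{n+1}] = (1-q^{n+1})B^{m+1}C^{n+1}.
\]
Applying $-\ad B$ and invoking \eqref{eq54} once more closes this induction as well.

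\textbf{The main obstacle} I anticipate is purely bookkeeping: the minus sign from the negated derivations, the opposite signs in $[A,C^n]$ versus $[B,C^n]$, and the uniform repackaging of $1-q^k$ as $(1-q)\{k\}_q$ all need to be tracked consistently on both summands so that the overall power of $(1-q)$ and the $q$-number exponents increment in lockstep. No new combinatorial identity is needed here: only the elementary \eqref{eq54} is used, and none of the more delicate machinery (such as the Pascal-type relation \eqref{eq272} that was central to Theorem~\ref{Theo4.3}) comes into play.
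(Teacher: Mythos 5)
Your treatment of \eqref{eq5.16} is correct and is essentially the paper's own argument: induction on $m$, base case from Proposition~\ref{prop5.3}, and an inductive step that applies $-\ad A$, commutes $A$ past $\g$ and past $C^n$ via $AC^n=q^nC^nA$, and absorbs each factor $1-q^k$ as $(1-q)\{k\}_q$ using \eqref{eq54}.

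The half concerning \eqref{eq5.17}, however, does not go through as written, because of an uncompensated sign. First, the base case does not match: at $m=0$ both $L_a$ and $L_b$ reduce to $q^n((\ad B)\circ(\ad C)^n)(A)$, which by Proposition~\ref{prop5.3} equals $(1-q)^n\{n\}_q\g C^n-(1-q)^n\{n+1\}_qC^{n+1}$; this agrees with \eqref{eq5.16} at $m=0$ but is the \emph{negative} of the right-hand side of \eqref{eq5.17} at $m=0$. Second, the two inductive steps are not symmetric: because $A$ passes $C^n$ from the left while $B$ passes it from the right, one has $(-\ad A)(\g C^nA^m)=+(1-q^n)\g C^nA^{m+1}$ but $(-\ad B)(\g B^mC^n)=-(1-q^n)\g B^{m+1}C^n$ --- your bracket $[B,\g B^mC^n]=(1-q^n)\g B^{m+1}C^n$ is right, but $-\ad B$ is $-[B,\cdot\,]$, so each application of $-\ad B$ multiplies the entire expression by an extra $-1$. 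Consequently $L_b$ equals $(-1)^{m+1}$ times the stated right-hand side of \eqref{eq5.17}, and the formula as printed holds only for odd $m$. To be fair, this discrepancy is already present in the proposition itself (the paper's own proof dismisses \eqref{eq5.17} with ``the same process''), so you have reproduced a defect of the source rather than introduced one; but your claims that the $m=0$ case ``matches'' and that the induction ``closes'' are false as written, and a correct argument must either carry the factor $(-1)^m$ explicitly or restate \eqref{eq5.17} with it. The downstream use in Proposition~\ref{propof5.6} is unaffected, since membership of $\g B^mC^n$ in $\freeLie$ is insensitive to an overall sign.
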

\begin{proof}
We prove equations \eqref{eq5.16} and \eqref{eq5.17} by induction on $m$. It can be easily shown that \eqref{eq5.16} and \eqref{eq5.17} holds for $m=0$. We suppose equation \eqref{eq5.16}  holds for some $k \in \N$. 
Denote $q^n ((-\ad A)^{k+1} \circ (\ad B)\circ (\ad C)^n )(A)$ by $L_3$. \\
Now, observe that 
\begin{eqnarray}
\nonumber L_3 &=& (-\ad A)\circ(q^n ((-\ad A)^{k} \circ (\ad B)\circ (\ad C)^n )(A)), \\
\nonumber &=&[(1-q)^{n+k} \{n\}_q^{k+1} \g C^n A^k-(1-q)^{n+k} \{n+1\}_q^{k+1} C^{n+1} A^k, A], \\
\nonumber &=& (1-q)^{n+k} \{n\}_q^{k+1} \g C^n A^{k+1}-(1-q)^{n+k} \{n+1\}_q^{k+1} C^{n+1} A^{k+1} \\
\nonumber &\quad& -(1-q)^{n+k} \{n\}_q^{k+1} q^n \g C^n A^{k+1}+(1-q)^{n+k} \{n+1\}_q^{k+1}q^{n+1} C^{n+1} A^{k+1}, \\
\nonumber &=& (1-q^n)(1-q)^{n+k} \{n\}_q^{k+1} \g C^n A^{k+1} \\
\nonumber & &-(1-q^{n+1})(1-q)^{n+k} \{n+1\}_q^{k+1} C^{n+1} A^{k+1}, \\
\nonumber &=& (1-q)^{n+k+1} \{n\}_q^{k+2} \g C^n A^{k+1}-(1-q)^{n+k+1} \{n+1\}_q^{k+2} C^{n+1} A^{k+1}.
\end{eqnarray}
So  by induction, \eqref{eq5.16} holds for any $m \in \N$.

We can also do the same process for equation \eqref{eq5.17} which will consequently lead to the desired result.
\end{proof}

\begin{proposition} \label{propof5.6} For any $n, \, m \in \Z^+$, \\
\vspace{-1.5em}$$ \g  C^n A^m, \,  \,  \g B^m C^n \in \freeLie. $$
\end{proposition}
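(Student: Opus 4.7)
The plan is to read off the conclusion almost directly from Proposition~\ref{propof5.5}, combined with the closure of $\freeLie$ under the bracket and the nonvanishing hypothesis on $q$. Specifically, rearranging the two identities of Proposition~\ref{propof5.5} expresses each of $\g C^n A^m$ and $\g B^m C^n$ as a linear combination of two elements that I already know, or can easily show, lie in $\freeLie$.

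First I would note that $L_a$ itself lies in $\freeLie$, because $A,B\in\freeLie$ by definition and $C=[A,B]\in\freeLie$, so iterating $\ad A$, $\ad B$, $\ad C$ starting from $A$ stays inside the Lie subalgebra $\freeLie$. Next, by Proposition~\ref{Propo5.2}, $C^{n+1}A^m\in\freeLie$ for all $n,m\in\N$. From the formula
\begin{equation*}
L_a \;=\; (1-q)^{n+m}\bigl(\{n\}_q^{m+1}\,\g C^n A^m - \{n+1\}_q^{m+1}\,C^{n+1}A^m\bigr),
\end{equation*}
I can solve for $\g C^n A^m$ as
\begin{equation*}
\g C^n A^m \;=\; \frac{L_a + (1-q)^{n+m}\{n+1\}_q^{m+1}\,C^{n+1}A^m}{(1-q)^{n+m}\{n\}_q^{m+1}},
\end{equation*}
provided the denominator is nonzero. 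Since $n,m\in\Z^+$, the standing assumption that $q$ is nonzero and not a root of unity guarantees $(1-q)^{n+m}\neq 0$ and $\{n\}_q=(1-q^n)/(1-q)\neq 0$; hence the denominator is a nonzero scalar, and the right-hand side is a linear combination of elements of $\freeLie$, so $\g C^n A^m\in\freeLie$.

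For $\g B^m C^n$ I would apply the same argument to the second identity in Proposition~\ref{propof5.5}, using that $L_b\in\freeLie$ by the same iterated-bracket reasoning and that $B^m C^{n+1}\in\freeLie$ by Proposition~\ref{Propo5.2}, then solving for $\g B^m C^n$ using that $(1-q)^{n+m}\{n\}_q^{m+1}\neq 0$. There is no real obstacle here: the only subtle point is to invoke the hypothesis on $q$ to ensure the $q$-integers that appear in the denominator do not vanish, which is precisely why this section opens by assuming $q$ is nonzero and not a root of unity. The proof is essentially a one-line rearrangement of Proposition~\ref{propof5.5} together with Proposition~\ref{Propo5.2}.
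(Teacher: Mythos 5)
Your proof is correct and takes essentially the same approach as the paper: the paper's entire proof is the remark that the result ``follows immediately from Proposition~\ref{propof5.5},'' and your write-up supplies exactly the details that remark elides ($L_a,L_b\in\freeLie$ by closure of $\freeLie$ under iterated brackets of $A$, $B$, $C$; the terms $C^{n+1}A^m$ and $B^mC^{n+1}$ lying in $\freeLie$ by Proposition~\ref{Propo5.2}; and the nonvanishing of the scalar $(1-q)^{n+m}\{n\}_q^{m+1}$ because $q$ is nonzero and not a root of unity). No gaps.
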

\begin{proof}
This follows immediately from Proposition~\ref{propof5.5}.
\end{proof}

The following proposition is a more general case of Proposition \eqref{propof5.6}.
\begin{proposition} \label{Propo5.7} 
 For any $m, \, n \in \Z^+$ and for any $h \in \N$, \\
\vspace{-1.5em}\begin{eqnarray} \label{ref5.7}
 \g^h C^n A^m, \,  \, \g^h B^mC^n \in \freeLie.
\end{eqnarray}
\end{proposition}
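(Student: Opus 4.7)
The plan is to induct on $h$. For the base case $h=0$, the assertions reduce to $C^n A^m,\, B^m C^n \in \freeLie$ for $m, n \in \Z^+$, which is immediate from Proposition~\ref{Propo5.2}.

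For the inductive step, suppose $\g^h C^n A^m,\, \g^h B^m C^n \in \freeLie$ for all $m, n \in \Z^+$. I would produce $\g^{h+1} C^n A^m$ from a single well-chosen commutator: since $\g$ is central,
\[
[B, \g^h C^n A^{m+1}] \;=\; \g^h\,[B, C^n A^{m+1}].
\]
The inner bracket can be evaluated directly in $\rq$ by pushing $B$ past $C^n$ using \eqref{eq38}, and then rewriting $BA^{m+1}$ and $A^{m+1}B$ in terms of $\g$, $C$, and $A^m$ via \eqref{eq15} and \eqref{eq36}. After collection, the result takes the two-term form
\[
[B, C^n A^{m+1}] \;=\; \frac{\{n\}_q}{q^n}\, \g C^n A^m \;+\; \beta_{n,m}\, C^{n+1} A^m
\]
for an explicit scalar $\beta_{n,m} \in \F$, and multiplication by the central element $\g^h$ yields
\[
[B, \g^h C^n A^{m+1}] \;=\; \frac{\{n\}_q}{q^n}\, \g^{h+1} C^n A^m \;+\; \beta_{n,m}\, \g^h C^{n+1} A^m.
\]

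The left-hand side belongs to $\freeLie$ as the bracket of $B \in \freeLie$ with $\g^h C^n A^{m+1} \in \freeLie$ (inductive hypothesis), and the term $\g^h C^{n+1} A^m$ belongs to $\freeLie$ by the inductive hypothesis applied with $n+1$ in place of $n$. Since $n \geq 1$ and $q$ is nonzero and not a root of unity, the coefficient $\{n\}_q/q^n$ is nonzero, so solving the displayed relation places $\g^{h+1} C^n A^m$ in $\freeLie$. The statement $\g^{h+1} B^m C^n \in \freeLie$ is obtained in exactly the same way from the commutator $[A, \g^h B^{m+1} C^n]$, using \eqref{eq16}, \eqref{eq35}, and \eqref{eq37} in place of \eqref{eq15}, \eqref{eq36}, and \eqref{eq38}.

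The main obstacle, beyond the bookkeeping in the commutator expansion, is confirming that the coefficient of $\g C^n A^m$ in $[B, C^n A^{m+1}]$ genuinely collapses to a nonzero multiple of $\{n\}_q$ rather than to an expression that could vanish. That its precise value is $\{n\}_q/q^n$ is what makes the induction close, and the non-vanishing of this coefficient is exactly where the standing hypothesis that $q$ is not a root of unity is indispensable.
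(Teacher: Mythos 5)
Your proposal is correct and follows essentially the same route as the paper: induction on $h$, with the key step being the bracket of $\g^h C^n A^{m+1}$ against $B$, which produces $\{n\}_q q^{-n}\,\g^{h+1}C^nA^m$ plus a multiple of $\g^h C^{n+1}A^m$ (your $\beta_{n,m}$ is $-q^{-n}\{n+m+1\}_q$, matching the paper's $L_4$ identity), and the non-vanishing of $\{n\}_q$ closes the induction. The only difference is cosmetic: you obtain $\g^h C^{n+1}A^m$ and $\g^h C^n A^{m+1}$ directly from the inductive hypothesis quantified over all $m,n$, whereas the paper re-derives them from brackets with $A$ and $C$.
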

\begin{proof}
We prove this proposition by induction on $h$. We first consider $\g^h C^n A^m$. 

The case $h=0$ holds by Proposition~\ref{Propo5.2}. Also, it is clear that $h=1$ holds by the Lemma~\ref{lem5.4}. 

We assume that for some $k \in \N$, $\g^k C^n A^m$ is in $\freeLie$. We show that $\g^{k+1} C^n A^m$ is also in $\freeLie$. \\
Recall that $A$ and $C$ are in $\freeLie$, hence, $[\g^k C^n A^m,A]$ and $[\g^k C^n A^m,C]$ are also in $\freeLie$.

Observe that
\vspace{-0.5em}\begin{eqnarray}
 [\g^k C^n A^m, A]&=&(1-q^n) \g^k C^n A^{m+1}, \label{Libra11}
\end{eqnarray}
while
\vspace{-0.5em}\begin{eqnarray}
[\g^k C^n A^m, C]&=&(q^m\ -1)\g ^k C^{n+1} A^{m} \label{Libra13}
\end{eqnarray}
which imply that $\g^k C^{n} A^{m+1}, \ \g^k C^{n+1} A^{m}$ is in $\freeLie$. 

We  use $L_4$ to denote $(1-q)q^n[\g^k C^n A^{m+1},B]$. Notice that

\begin{eqnarray}
\nonumber L_4&=&(1-q)q^n \g^k C^n A^{m+1} B-(1-q)q^n B\g^k C^n A^{m+1}, \\
\nonumber&=&(1-q)q^n \g^k C^n A^m AB-(1-q)\g^k C^n BAA^m, \\
\nonumber&=&q^n \g^k C^n A^m (\g-qC)-\g^k C^n (\g-C)A^m, \\
\nonumber&=&q^n \g^{k+1} C^n A^m-q^{n+m+1} \g^k C^{n+1}  A^m-\g^{k+1} C^n A^m+\g^k C^{n+1} A^m, \\
\nonumber&=& (1-q^{n+m+1})\g^k C^{n+1} A^m-(1-q^n)\g^{k+1} C^n A^m, \\
&=&(1-q)\{n+m+1\}_q \g^k C^{n+1} A^m-(1-q)\{n\}_q \g^{k+1} C^n A^m.  \label{Libra12}
\end{eqnarray}
Consequently, we have  
\begin{eqnarray} \label{equaF5.28}
\nonumber \{n\}_q \g^{k+1} C^n A^m=\{n+m+1\}_q \g^k C^{n+1} A^m-q^n [\g^k C^n A^{m+1},B].
\end{eqnarray}
which implies that $ \g^{k+1} C^n A^m \in \freeLie$. By induction, the desired result follows.

We can show that $\g^h B^mC^n \in \freeLie$ for all natural number $h$ and positive integers $n, \ m$ in similar manner. 
\end{proof}

\begin{proposition} \label{Propo5.8}
Let $l, \, m, \, n \in \Z^+$ and $h \in \N$. If $m \neq l$,  then 
\begin{eqnarray}
 \g^h C^n A^m B^l, \,  \, \g^h C^n B^lA^m \in \freeLie.
\end{eqnarray}
Moreover, each of these elements are linear combination of elements  in \eqref{ref5.7}.
\end{proposition}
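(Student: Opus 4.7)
The plan is to reduce $\g^h C^n A^m B^l$ and $\g^h C^n B^l A^m$ to explicit linear combinations of the monomials $\g^H C^K A^T$ and $\g^H B^T C^K$ appearing in Proposition~\ref{Propo5.7}, by combining the expansion formulas of Theorem~\ref{Theo4.3} with the $q$-commutation relations of Proposition~\ref{Profin4.2}. Since $m\neq l$, exactly one of the inequalities $l>m$ or $l<m$ holds, and the two cases will be handled in parallel.

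Suppose first that $l>m$. Set $s:=l-m\in\Z^+$ and factor $A^m B^l=(A^m B^m)B^s$. Theorem~\ref{Theo4.3} rewrites $A^m B^m$ as an explicit linear combination of the monomials $\g^{m-i}C^i$, $0\leq i\leq m$. Moving the trailing $B^s$ past each $C^i$ via \eqref{eq28}, prepending $\g^h C^n$, and applying \eqref{eq28} a second time together with the centrality of $\g$ will yield an identity of the form
$$\g^h C^n A^m B^l \;=\; \frac{1}{(1-q)^m}\sum_{i=0}^{m}(-1)^i q^{\binom{i+1}{2}+(n+i)s}\binom{m}{i}_q\,\g^{h+m-i}B^s C^{n+i}.$$
Every summand $\g^{H}B^{s}C^{n+i}$ has both exponents $s\geq 1$ and $n+i\geq 1$ strictly positive, hence lies in $\freeLie$ by Proposition~\ref{Propo5.7}; this simultaneously establishes the membership in $\freeLie$ and the required linear-combination form.

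The case $l<m$ is treated symmetrically: set $s':=m-l\in\Z^+$, factor $A^m B^l=A^{s'}(A^l B^l)$, expand $A^l B^l$ via Theorem~\ref{Theo4.3}, and push $A^{s'}$ through each $C^i$ using \eqref{eq27}. This produces a linear combination of monomials $\g^{H}C^{K}A^{s'}$ with $K,s'\geq 1$, again covered by Proposition~\ref{Propo5.7}. For $\g^h C^n B^l A^m$ the identical strategy applies with equation \eqref{eq34} playing the role of Theorem~\ref{Theo4.3}: depending on the sign of $l-m$, one factors $B^l A^m$ as either $B^{l-m}(B^m A^m)$ or $(B^l A^l)A^{m-l}$, applies \eqref{eq34} to the central factor, and invokes Proposition~\ref{Profin4.2} to reorder the leftover power of $B$ or $A$ through the $C^i$'s.

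The only obstacle is the bookkeeping of the powers of $q$, the $q$-binomial coefficients $\binom{m}{i}_q$, and the scalar prefactors $(1-q)^{-m}$, $q^{-\binom{m}{2}}$, etc., that accumulate during the reorderings. Conceptually, the hypothesis $m\neq l$ enters at exactly one place: it guarantees that the leftover exponent $s$ or $s'$ is strictly positive, so that every monomial produced has a $B$-factor or $A$-factor of positive degree and therefore belongs to the family \eqref{ref5.7}. Were $m=l$, the same computation would instead produce terms of the form $\g^H C^K$, which are not in general known to lie in $\freeLie$ for $q\neq 1$.
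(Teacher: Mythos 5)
Your proposal is correct and follows essentially the same route as the paper: split on the sign of $l-m$, factor off the balanced block $A^{\min(m,l)}B^{\min(m,l)}$, expand it via Theorem~\ref{Theo4.3}, reorder the leftover power of $B$ or $A$ through the $C^i$'s with Proposition~\ref{Profin4.2}, and invoke Proposition~\ref{Propo5.7} term by term; your displayed identity for the case $l>m$ agrees with the paper's \eqref{ref5.27}. The paper likewise dispatches the $\g^h C^n B^l A^m$ case with a ``similar manner'' remark, so no further detail is needed there.
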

\begin{proof}
We consider cases where $m<l, \,  m>l$ for $\g^h C^n A^m B^l$ .

Suppose $m<l$. Then using Theorem~\ref{Theo4.3}, we have
\begin{eqnarray}
\nonumber (1-q)^m\g^h C^n A^m B^l &=& (1-q)^m\g^h C^n A^mB^m B^{l-m}, \\
\nonumber&=&\g^h C^n \left(\sum_{i=0}^{m}(-1)^i q^{\binom{i+1}{2}} {\binom{m}{i}}_q \g^{m-i} \LieAB^i\right) B^{l-m}, \\
\nonumber&=&\g^h C^n \left(\sum_{i=0}^{m}(-1)^i q^{\binom{i+1}{2}} {\binom{m}{i}}_q \g^{m-i} C^i\right) B^{l-m}, \\
&=& \sum_{i=0}^{m}(-1)^i q^{\binom{i+1}{2}} {\binom{m}{i}}_q \g^{h+m-i} C^{n+i}B^{l-m}. \label{sum3}
\end{eqnarray}
Using Proposition~\ref{Profin4.2}, we further simplify the right-hand side of \eqref{sum3} into the following expression.
\begin{eqnarray}
& & \sum_{i=0}^{m}(-1)^i q^{\binom{i+1}{2} +(l-m)(n+i)} {\binom{m}{i}}_q \g^{h+m-i}B^{l-m} C^{n+i}. \label{ref5.27}
\end{eqnarray}

Observe that each term of the summation in \eqref{ref5.27} is in $\freeLie$ by Proposition~\ref{Propo5.7} hence, it follows that $\g^h C^n A^m B^l \in \freeLie$ whenever $m<l$. 

Meanwhile, suppose $m>l$. Then using Theorem~\ref{Theo4.3} would consequently result to 

\begin{eqnarray}
\nonumber (1-q)^l\g^h C^n A^m B^l &=& (1-q)^l\g^h C^n A^{m-l}A^lB^l, \\
\nonumber&=& \g^h C^n A^{m-l}\left(\sum_{i=0}^{l}(-1)^i q^{\binom{i+1}{2}} {\binom{l}{i}}_q \g^{l-i} \LieAB^i\right), \\
\nonumber&=& \g^h C^n A^{m-l}\left(\sum_{i=0}^{l}(-1)^i q^{\binom{i+1}{2}} {\binom{l}{i}}_q \g^{l-i} C^i\right), \\
\nonumber&=& \g^h C^n \sum_{i=0}^{l}(-1)^i q^{\binom{i+1}{2}+(im-il)} {\binom{l}{i}}_q \g^{l-i} C^iA^{m-l}, \\
&=& \sum_{i=0}^{l}(-1)^i q^{\binom{i+1}{2}+(im-il)} {\binom{l}{i}}_q \g^{h+l-i} C^{n+i}A^{m-l}.  \label{ref5.29}
\end{eqnarray}
Observe that each term of the summation in \eqref{ref5.29} are in $\freeLie$ by Proposition~\ref{Propo5.7} hence, it follows that $\g^h C^n A^m B^l \in \freeLie$ whenever $m>l$.

We can show that $\g^h C^n B^lA^m$ are also elements of $\freeLie$ in similar manner.
\end{proof}

\begin{theorem} \label{Theof5.10} For any $m, \, n, \, h \in \Z^+$ and for any $j,k \in \N$, the element 
\begin{eqnarray}
\nonumber (1-q)^{m-1}q^{\binom{m}{2}}q^{m(n+h)}[\g^kB^mC^n, \g^jC^h A^m] 
\end{eqnarray}
of $\rq$ is equal to
\begin{eqnarray}
\nonumber & & \sum_{i=0}^{m}(-1)^i q^{\binom{m-i}{2}}\{mn+mh+im\}_q {\binom{m}{i}}_q \g^{m-i+j+k}C^{i+h+n}. 
\end{eqnarray}
\end{theorem}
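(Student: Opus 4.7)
The plan is to reduce the commutator to expressions involving $A^mB^m$ and $B^mA^m$, apply Theorem~\ref{Theo4.3}, and then clean up the $q$-powers with one elementary binomial identity.

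First, since $\g$ is central in $\rq$ (relations \eqref{eq39}--\eqref{eq41}), I would pull out the $\g$-factors to get
\begin{equation*}
[\g^kB^mC^n,\g^jC^hA^m] = \g^{j+k}\,[B^mC^n, C^hA^m] = \g^{j+k}\bigl(B^mC^{n+h}A^m - C^hA^mB^mC^n\bigr).
\end{equation*}
Next, I would use Proposition~\ref{Profin4.2} (equation \eqref{eq27} with $C = \LieAB$) to commute $A^m$ past $C^{n+h}$, namely $A^mC^{n+h} = q^{m(n+h)}C^{n+h}A^m$, which rearranges to $B^mC^{n+h}A^m = q^{-m(n+h)}B^mA^mC^{n+h}$. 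Now Theorem~\ref{Theo4.3} expands each of $(1-q)^mA^mB^m$ and $q^{\binom{m}{2}}(1-q)^mB^mA^m$ as an $\F$-linear combination of $\g^{m-i}C^i$.

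After multiplying the commutator by $q^{\binom{m}{2}}(1-q)^m q^{m(n+h)}$, applying these two expansions, and factoring the common $\g^{m-i}C^{i+n+h}$ from corresponding summands, the coefficient of the $i$-th term becomes
\begin{equation*}
(-1)^i \binom{m}{i}_q\Bigl(q^{\binom{m-i}{2}} - q^{\binom{m}{2}+\binom{i+1}{2}+m(n+h)}\Bigr).
\end{equation*}

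The only real task — and the main (but very mild) obstacle — is the algebraic identity
\begin{equation*}
\binom{m}{2}+\binom{i+1}{2} = \binom{m-i}{2} + im,
\end{equation*}
which is immediate since both sides equal $(m^2+i^2-m+i)/2$. This lets me factor $q^{\binom{m-i}{2}}$ out of each summand, reducing the bracket to $1 - q^{im+m(n+h)}$. Applying \eqref{eq54} to rewrite $1-q^{mn+mh+im} = (1-q)\{mn+mh+im\}_q$ cancels a factor of $(1-q)$, turning $(1-q)^m$ into $(1-q)^{m-1}$. Reinserting the central $\g^{j+k}$ into each $\g^{m-i}$ gives $\g^{m-i+j+k}$, and the resulting expression matches the stated sum term by term. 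The whole argument is thus a single direct calculation built on centrality of $\g$, Proposition~\ref{Profin4.2}, Theorem~\ref{Theo4.3}, and the binomial-coefficient identity above — no induction or auxiliary constructions are required.
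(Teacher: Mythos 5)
Your proposal is correct and follows essentially the same route as the paper's own proof: both pull the central powers of $\g$ out of the bracket, use Proposition~\ref{Profin4.2} to reduce it to $\bigl(q^{\binom{m}{2}}(1-q)^mB^mA^m - q^{\binom{m}{2}}(1-q)^mq^{m(n+h)}A^mB^m\bigr)\g^{j+k}C^{h+n}$, expand via Theorem~\ref{Theo4.3}, and combine the two sums using the exponent identity $\binom{m}{2}+\binom{i+1}{2}=\binom{m-i}{2}+im$ together with $1-q^{mn+mh+im}=(1-q)\{mn+mh+im\}_q$. No substantive differences.
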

\begin{proof}
Observe that 

\begin{eqnarray}
\nonumber q^{\binom{m}{2}}(1-q)^mq^{m(n+h)}[\g^kB^mC^n, \g^jC^h A^m] &=& q^{\binom{m}{2}}(1-q)^mq^{m(n+h)}\g^{k+j}B^mC^{n+h} A^m\\
\nonumber & & - q^{\binom{m}{2}}(1-q)^mq^{m(n+h)}\g^{j+k}C^h A^mB^mC^n, \\
\nonumber &=& q^{\binom{m}{2}}(1-q)^m\g^{k+j}B^m A^mC^{n+h}\\
\nonumber & & -  q^{\binom{m}{2}}(1-q)^mq^{m(n+h)}\g^{j+k} A^mB^mC^{h+n}, \\
\nonumber &=& (q^{\binom{m}{2}}(1-q)^mB^m A^m\\
 \nonumber & & -  q^{\binom{m}{2}}(1-q)^m q^{m(n+h)} A^mB^m)\g^{j+k}C^{h+n}. \label{equaf5.39}
\end{eqnarray}

From Theorem~\ref{Theo4.3}, we have 
 \begin{eqnarray}
\nonumber q^{\binom{m}{2}}q^{m(n+h)}(1-q)^mA^m B^m  &=& \sum_{i=0}^{m}{(-1)^i q^{\binom{i+1}{2}+{\frac{m^2 -m +2mn+2mh}{2}}} {\binom{m}{i}}_q \g^{m-i} C^i},   \label{eqf67} \\
 \nonumber q^{\binom{m}{2}}(1-q)^mB^m A^m &=& \sum_{i=0}^{m}{(-1)^i q^{\binom{m-i}{2}} {\binom{m}{i}}_q \g^{m-i}C^i}.  \label{eqf68}
 \end{eqnarray} 
 Also, by some computation, we can show that \\
 \begin{eqnarray}
\nonumber \left(\binom{i+1}{2}+\frac{m^2 -m +2mn+2mh}{2}\right) - \binom{m-i}{2} &=& mn+mh+im. 
\end{eqnarray}
Using these equations to solve for the equivalent expression of \eqref{equaf5.39} would consequently lead to the following.
 \begin{eqnarray}
& & \sum_{i=0}^{m}(-1)^i q^{\binom{m-i}{2}} (1-q)\{mn+mh+im\}_q {\binom{m}{i}}_q \g^{m-i+j+k}C^{i+h+n}. 
 \end{eqnarray}
from which the desired result follows.
\end{proof}

  \begin{proposition}\label{Fpropo5.11} Fix a natural number $r$ and positive integers $n$ and $h$. For any positive integer $m >r$,
\begin{eqnarray} \label{rel272}
 \sum_{i=0}^{m}(-1)^i q^{\binom{m-i}{2}}(q^{n+h+i})^r \binom{m}{i}_q &=& 0. 
\end{eqnarray}
  \end{proposition}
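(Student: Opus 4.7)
The plan is to recognize the sum as the finite $q$-binomial expansion of a product that contains a vanishing factor whenever $m>r$. As a first step, using the elementary numerical identity
\[
\binom{m-i}{2} \;=\; \binom{m}{2} + \binom{i}{2} - i(m-1),
\]
I would rewrite each term of the sum as
\[
(-1)^i q^{\binom{m-i}{2}}(q^{n+h+i})^r \binom{m}{i}_q \;=\; q^{(n+h)r + \binom{m}{2}}\,(-1)^i q^{\binom{i}{2}} \bigl(q^{r-m+1}\bigr)^i \binom{m}{i}_q,
\]
so that the left-hand side of \eqref{rel272} factors as
\[
q^{(n+h)r + \binom{m}{2}} \sum_{i=0}^{m}(-1)^i q^{\binom{i}{2}} \binom{m}{i}_q \bigl(q^{r-m+1}\bigr)^i.
\]

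Next, I would invoke the finite $q$-binomial theorem
\[
\sum_{i=0}^{m}(-1)^i q^{\binom{i}{2}} \binom{m}{i}_q x^i \;=\; \prod_{k=0}^{m-1}\bigl(1 - xq^k\bigr),
\]
which can be established by a short induction on $m$ from the Pascal-type recurrence \eqref{eq272}. Specializing to $x = q^{r-m+1}$ turns the right-hand side into $\prod_{k=0}^{m-1}(1 - q^{k+r-m+1})$, so the left-hand side of \eqref{rel272} equals
\[
q^{(n+h)r + \binom{m}{2}} \prod_{k=0}^{m-1}\bigl(1 - q^{k+r-m+1}\bigr).
\]

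Finally, the hypothesis $m > r \geq 0$ guarantees that the integer $k_0 := m - 1 - r$ lies in $\{0, 1, \ldots, m-1\}$, and the factor corresponding to $k = k_0$ is precisely $1 - q^{0} = 0$. This forces the entire product, and hence the original sum, to vanish, giving \eqref{rel272}.

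The main obstacle is really only bookkeeping: correctly rearranging the exponent $\binom{m-i}{2} + ir$ into a form compatible with the $q$-binomial theorem, and (if it is not taken as a known identity) carrying out the routine induction that deduces the $q$-binomial theorem from \eqref{eq272}. Once these ingredients are in place the vanishing of the factor indexed by $k_0 = m-1-r$ yields the conclusion immediately, and no use of the non-root-of-unity assumption on $q$ is required here.
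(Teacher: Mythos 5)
Your proof is correct, and it takes a genuinely different route from the paper. You factor the exponent via $\binom{m-i}{2}=\binom{m}{2}+\binom{i}{2}-i(m-1)$ (which checks out), pull out the constant $q^{(n+h)r+\binom{m}{2}}$, and recognize the remaining sum as the Gauss $q$-binomial expansion $\sum_{i=0}^{m}(-1)^i q^{\binom{i}{2}}\binom{m}{i}_q x^i=\prod_{k=0}^{m-1}(1-xq^k)$ evaluated at $x=q^{r-m+1}$; the factor indexed by $k_0=m-1-r$ is $1-q^0=0$ precisely when $0\le r\le m-1$, i.e.\ $m>r$. The paper instead argues by a double induction on $m-r$, whose base case $m-r=1$ is itself handled by an inner induction on $r$, each step applying the Pascal-type recurrence \eqref{eq272} to the sum directly, reindexing, and invoking the convention that $\binom{n}{p}_q=0$ for $p<0$ or $p>n$. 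Both arguments ultimately rest on the same recurrence \eqref{eq272} (your $q$-binomial theorem is proved from its second form $\binom{m+1}{i}_q=q^{m+1-i}\binom{m}{i-1}_q+\binom{m}{i}_q$ by induction on $m$), but yours is shorter, makes transparent exactly why the hypothesis $m>r$ is what is needed, and gives strictly more information: for $m\le r$ it exhibits the sum as the explicit nonzero product $q^{(n+h)r+\binom{m}{2}}\prod_{k=0}^{m-1}\bigl(1-q^{k+r-m+1}\bigr)$. Your closing observation that the non-root-of-unity assumption on $q$ plays no role here is also correct, since this is a polynomial identity in $q$.
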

\begin{proof}
We prove this by induction on $m-r$.
If $m-r=1$, then $m=r+1$ which gives us the  case 
\begin{eqnarray} \label{eqlemzero}
\sum_{i=0}^{r+1}(-1)^i q^{\binom{r+1-i}{2}}(q^{n+h+i})^r \binom{r+1}{i}_q.
\end{eqnarray}
We have to show that \eqref{eqlemzero} is indeed equal  to zero. We do this by induction on $r$. The case $r=0$ is a trivial case.

Suppose \eqref{eqlemzero} holds for some $k \in \N$, that is, \begin{eqnarray}
\sum_{i=0}^{k+1}(-1)^i q^{\binom{k+1-i}{2}}(q^{n+h+i})^k \binom{k+1}{i}_q &=&0.
\end{eqnarray}
We prove that it is also true for $k+1$. We use $L_5$ to denote the following expression.
$$\sum_{i=0}^{k+2}(-1)^i q^{\binom{k+2-i}{2}}(q^{n+h+i})^{k+1} \binom{k+2}{i}_q$$
\\Observe that using equation \eqref{eq272}, we have 
\begin{eqnarray}
 L_5 &=& \sum_{i=0}^{k+2}(-1)^i q^{\binom{k+2-i}{2}}(q^{n+h+i})^{k+1} \left(\binom{k+1}{i-1}_q + q^i\binom{k+1}{i}_q \right). \label{sum4}
\end{eqnarray}
The right-hand expression of \eqref{sum4}
is equivalent to the following.
\begin{eqnarray}
 \nonumber& &\sum_{i=0}^{k+2}(-1)^i q^{\binom{k+2-i}{2}}q^{n+h+i}(q^{n+h+i})^k\binom{k+1}{i-1}_q \\ 
 & & +\sum_{i=0}^{k+2}(-1)^i q^{\binom{k+2-i}{2}}q^iq^{n+h+i}(q^{n+h+i})^k \binom{k+1}{i}_q. \label{sum5}
\end{eqnarray}
By some routine computations, we can simplify expression \eqref{sum5} such that
\begin{eqnarray}
\nonumber L_5  &=&  -q^{n+h+k+1}\sum_{i=-1}^{k+1}(-1)^{i} q^{\binom{k+1-i}{2}}q^{i}(q^{n+h+i})^k\binom{k+1}{i}_q \\
\nonumber&\quad& +q^{n+h+k+1}\sum_{i=0}^{k+2}(-1)^i q^{\binom{k+1-i}{2}}q^{i}(q^{n+h+i})^k \binom{k+1}{i}_q 
\end{eqnarray}
the second summation in which can be decomposed such that we further have
\begin{eqnarray}
\nonumber L_5  &=&  -q^{n+h+k+1}\sum_{i=0}^{k+1}(-1)^{i} q^{\binom{k+1-i}{2}}q^{i}(q^{n+h+i})^k\binom{k+1}{i}_q \\
\nonumber & & -q^{n+h+k+1}(-1)^{-1} q^{\binom{k+2}{2}}q^{-1}(q^{n+h-1})^k\binom{k+1}{-1}_q   \\
\nonumber&\quad& +q^{n+h+k+1}\sum_{i=0}^{k+1}(-1)^i q^{\binom{k+1-i}{2}}q^{i}(q^{n+h+i})^k \binom{k+1}{i}_q \\
\nonumber & & +q^{n+h+k+1}(-1)^{k+2} q^{\binom{1}{2}}q^{k+2}(q^{n+h+k+2})^k \binom{k+1}{k+2}_q. 
\end{eqnarray}
By the interpretation of \eqref{eq13} for the case when $p<0$ or $p>n$, we would consequently have 
\begin{eqnarray}
\nonumber L_5 &=&  (q^{n+h+k+1}-q^{n+h+k+1})\sum_{i=0}^{k+1}(-1)^{i} q^{\binom{k+1-i}{2}}q^{i}(q^{n+h+i})^r\binom{k+1}{i}_q =0.
\end{eqnarray} \\
which implies that \eqref{eqlemzero} is indeed equal to zero, and this implies that \eqref{rel272} holds when $m-r=1$.  

Now, we assume \eqref{rel272} is true for some $m-r=s \in \N$ . Thus we have $m=r+s$. This means that we assume that 
\begin{eqnarray}
 \sum_{i=0}^{r+s}(-1)^i q^{\binom{r+s-i}{2}}(q^{n+h+i})^r \binom{r+s}{i}_q &=&0,\label{induct}
\end{eqnarray}
holds for some $s \in \N$. We show that \eqref{induct} is also true when $m-r=s+1$, that is, $m=r+s+1$.
We denote the expression 
$$\sum_{i=0}^{r+s+1}(-1)^i q^{\binom{r+s+1-i}{2}}(q^{n+h+i})^r \binom{r+s+1}{i}_q,$$
by $L_6$. Observe that using \eqref{eq272}, we have
\begin{eqnarray}
\nonumber L_6 &=& \sum_{i=0}^{r+s+1}(-1)^i q^{\binom{r+s+1-i}{2}}(q^{n+h+i})^r \left(\binom{r+s}{i-1}_q + q^i\binom{r+s}{i}_q \right).
\end{eqnarray}

By some routine computations, we have
\begin{eqnarray}
\nonumber L_6 &=&  \sum_{i=0}^{r+s+1}(-1)^i q^{\binom{r+s+1-i}{2}}(q^{n+h+i})^r\binom{r+s}{i-1}_q \\
\nonumber & & +q^{r+s}\sum_{i=0}^{r+s+1}(-1)^i q^{\binom{r+s-i}{2}}(q^{n+h+i})^r \binom{r+s}{i}_q
\end{eqnarray} 
in which the right-hand side can be decomposed in a way such that
\begin{eqnarray}
\nonumber L_6 &=&  \sum_{i=0}^{r+s+1}(-1)^i q^{\binom{r+s+1-i}{2}}(q^{n+h+i})^r\binom{r+s}{i-1}_q \\
\nonumber & & +q^{r+s}\sum_{i=0}^{r+s}(-1)^i q^{\binom{r+s-i}{2}}(q^{n+h+i})^r \binom{r+s}{i}_q \\
\nonumber&\quad& + (-1)^{r+s+1} q^{\binom{r+s+1-(r+s+1)}{2}}(q^{n+h+r+s+1})^r q^{r+s+1}\binom{r+s}{r+s+1}_q.
\end{eqnarray}
Notice that by the inductive hypothesis, in addition to the interpretation of \eqref{eq13} whenever $p>n$, the above equation would simply be reduced into 
\begin{eqnarray}
 L_6 &=&  -q^r\sum_{i=-1}^{r+s}(-1)^{i} q^{\binom{r+s-i}{2}}(q^{n+h+i})^r\binom{r+s}{i}_q. \label{sum6}
\end{eqnarray}
The equation \eqref{sum6} can be further reduced into 
\begin{eqnarray}
 \nonumber L_6 &=&  -q^r\sum_{i=0}^{r+s}(-1)^{i} q^{\binom{r+s-i}{2}}(q^{n+h+i})^r\binom{r+s}{i}_q \\
 \nonumber & & +(-q^r)(-1)^{-1} q^{\binom{r+s+1}{2}}(q^{n+h-1})^r\binom{r+s}{-1}_q \end{eqnarray}
which, by using the inductive hypothesis, implies that $L_6=0$. This shows that \eqref{rel272} also holds for $s+1$.
 
 Thus, by induction on $m-r$, the result follows.  
\end{proof}

Some elements of our candidate basis for $\freeLie$ includes elements of the form 
\begin{eqnarray}
\g^h C^n A^m, \ \ \g^h B^l C^n\label{vectorgoal}
\end{eqnarray}
where $l,m,n\in\Z^+$ and $h\in\N$. Our goal is to show that the Lie bracket of any two vectors among \eqref{vectorgoal} is in $\freeLie$. We consider at this point the case $m=h$. That is, we show that $[\g^h C^n A^m, \g^j B^m C^k]$ is a  linear combination of elements of the form $\g^r C^s$ where $r, \, s \in \N$. More specifically, we show that $[\g^h C^n A^m,\g^j B^m C^k]$ is a linear combination of elements of the form  
 \begin{equation} \label{basisf1}
     \{n+1\}_q \g^h C^{n+1}-\{n\}_q \g^{h+1} C^n,
 \end{equation}
which is equal to $q^n [\g^h C^n A,B] \in \freeLie$ using previously established reordering formulae. In other words, our candidate basis should include vectors of the form \eqref{basisf1}. 

We need the following relations and computations in order to prove that  $[\g^h C^n A^m, \g^j B^m C^k]$ is a linear combination of \eqref{basisf1}. Given $i\in\Z^+$, define
\begin{eqnarray}
\nonumber \psi_i &=&\{i+h+n\}_q \g^{m-i+j+k} C^{i+h+n}-\{i+h+n-1\}_q \g^{m-i+j+k+1} C^{i+h+n-1} .
\end{eqnarray}
Given $m\in\Z^+$, we are interested in properties of $\psi_i$ where $i\in\{1,2,\ldots,m\}$. We show below a sequence of relations that has some behavior related to ``telescoping series" computations, and this gives the intuition for what we want to accomplish.
\begin{eqnarray}
\nonumber \{1+h+n\}_q \g^{m-1+j+k} C^{1+h+n}&=& \psi_1 +\{h+n\}_q \g^{m+j+k} C^{h+n} \label{rela281} \\
\nonumber \{2+h+n\}_q \g^{m-2+j+k} C^{2+h+n} &=&\psi_2 + \psi_1 +\{h+n\}_q \g^{m+j+k} C^{h+n}  \\
\nonumber \{3+h+n\}_q \g^{m-3+j+k} C^{3+h+n}&=&\psi_3+\psi_2 + \psi_1 +\{h+n\}_q \g^{m+j+k} C^{h+n}  \\
\nonumber &\vdots& \\
\nonumber \{(m-1)+h+n\}_q \g^{1+j+k} C^{(m-1)+h+n}&=& \sum_{i=1}^{m-1}\psi_i +\{h+n\}_q \g^{m+j+k} C^{h+n} \\
\{m+h+n\}_q \g^{j+k} C^{m+h+n}&=& \sum_{i=1}^{m}\psi_i +\{h+n\}_q \g^{m+j+k} C^{h+n}  \label{rela285}
\end{eqnarray}
The above relations can be proven by routine computations and arguments, and they are instrumental in the proofs of our succeeding results.

 \begin{theorem} \label{theof5.12}
 For any $m, \, n, \, h \in \Z^+$ and for any $j, \, k \in \N$,
\begin{eqnarray}
 L_c &:=& \sum_{i=0}^{m}(-1)^i q^{\binom{m-i}{2}}\{m\}_{q^{n+h+i}} {\binom{m}{i}}_q \sum_{t=0}^{i}\psi_t   \label{close1}
\end{eqnarray}  
\vspace{-1em}where $\psi_t= \{t+h+n\}_q \g^{j+k} C^{t+h+n}-\{(t-1)+h+n\}_q \g^{1+j+k} C^{(t-1)+h+n}$ 
 \end{theorem}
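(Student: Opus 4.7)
The plan is to derive the asserted identity for $L_c$ by post-processing the closed-form commutator expansion already obtained in Theorem~\ref{Theo4.3}. The key idea is that the right-hand side of Theorem~\ref{Theo4.3}, which equals $(1-q)^{m-1}q^{\binom{m}{2}}q^{m(n+h)}[\g^kB^mC^n, \g^jC^h A^m]$, can be split into two pieces: one that telescopes into the $\psi_t$-combinations defining $L_c$, and one that vanishes identically via Proposition~\ref{Fpropo5.11}.

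First I would rewrite the coefficient $\{mn+mh+im\}_q = \{m(n+h+i)\}_q$ using the factoring relation \eqref{eqf12}, obtaining $\{m(n+h+i)\}_q = \{n+h+i\}_q\,\{m\}_{q^{n+h+i}}$. This isolates a factor $\{n+h+i\}_q$ which then pairs naturally with $\g^{m-i+j+k}C^{i+h+n}$ so that the telescoping relations \eqref{rela281}--\eqref{rela285} listed just before the theorem apply: they give
\[
\{i+h+n\}_q\,\g^{m-i+j+k}\,C^{i+h+n} \;=\; \sum_{t=1}^{i}\psi_t \;+\; \{h+n\}_q\,\g^{m+j+k}\,C^{h+n},
\]
where the right-hand term is independent of $i$. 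Substituting this into the expansion from Theorem~\ref{Theo4.3} splits the entire sum into a ``telescoping part'' $\sum_{i=0}^{m}(-1)^i q^{\binom{m-i}{2}}\{m\}_{q^{n+h+i}}\binom{m}{i}_q \sum_{t=1}^{i}\psi_t$ (which is the claimed $L_c$) and a ``remainder part'' $\{h+n\}_q\,\g^{m+j+k}\,C^{h+n}$ multiplied by the scalar $\sum_{i=0}^{m}(-1)^i q^{\binom{m-i}{2}}\{m\}_{q^{n+h+i}}\binom{m}{i}_q$.

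Next I would show that this scalar remainder is zero. Expanding $\{m\}_{q^{n+h+i}}=\sum_{l=0}^{m-1}(q^{n+h+i})^{l}$ via \eqref{eq271} and swapping the order of summation, the remainder equals
\[
\sum_{l=0}^{m-1}\,\sum_{i=0}^{m}(-1)^{i}q^{\binom{m-i}{2}}(q^{n+h+i})^{l}\binom{m}{i}_q.
\]
For each $l\in\{0,1,\dots,m-1\}$ we have $m>l$, so Proposition~\ref{Fpropo5.11} (applied with $r=l$) forces the inner sum to vanish. Hence the entire remainder part collapses, and the original commutator expansion reduces exactly to the asserted form of $L_c$.

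The main obstacle is the bookkeeping for the telescoping identity and the precise matching of indices between the $\psi_t$-notation from the preparatory paragraph and the form appearing in the theorem; one must also check that the edge case of the double sum (the $t=0$ contribution and the upper telescoping index) is handled correctly. The crucial structural input is Proposition~\ref{Fpropo5.11}, and its strict inequality hypothesis $m>r$ is exactly what is guaranteed by the upper limit $l\le m-1$ arising from the length of the $q$-integer $\{m\}_{q^{n+h+i}}$; this alignment is what makes the cancellation of the base term work.
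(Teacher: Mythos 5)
Your proposal follows the paper's proof essentially step for step: factor $\{mn+mh+im\}_q=\{n+h+i\}_q\,\{m\}_{q^{n+h+i}}$ via \eqref{eqf12}, apply the telescoping relations culminating in \eqref{rela285}, and kill the leftover $\{h+n\}_q\,\g^{m+j+k}C^{h+n}$ term by expanding $\{m\}_{q^{n+h+i}}$ with \eqref{eq271} and invoking Proposition~\ref{Fpropo5.11} for each power $l\le m-1<m$. The only slip is the citation: the commutator expansion you start from is Theorem~\ref{Theof5.10}, not Theorem~\ref{Theo4.3}, though the expression you quote makes the intended result unambiguous.
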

\vspace{0.7em}\begin{proof}
Recall that from Theorem~\ref{Theof5.10}, \\
\vspace{-1em}\begin{eqnarray} \label{equa282}
L_c&=& \sum_{i=0}^{m}(-1)^i q^{\binom{m-i}{2}}\{mn+mh+im\}_q {\binom{m}{i}}_q \g^{m-i+j+k}C^{i+h+n} 
\end{eqnarray}
Using \eqref{eqf12}, we can simplify the right-hand side expression of \eqref{equa282} into the following:
\begin{eqnarray}
& & \sum_{i=0}^{m}(-1)^i q^{\binom{m-i}{2}}\{m\}_{q^{n+h+i}}\{n+h+i\}_q {\binom{m}{i}}_q \g^{m-i+j+k}C^{i+h+n} \label{sum7}
\end{eqnarray}
Expanding the summation \eqref{sum7}, we have
\vspace{-1em}\begin{eqnarray}
\nonumber L_c &=& q^{\binom{m}{2}}\{m\}_{q^{n+h}}\{n+h\}_q {\binom{m}{0}}_q \g^{m+j+k}C^{h+n} \\
\nonumber & & -q^{\binom{m-1}{2}}\{m\}_{q^{n+h+1}}\{n+h+1\}_q {\binom{m}{1}}_q \g^{m-1+j+k}C^{1+h+n} \\
\nonumber & & +q^{\binom{m-2}{2}}\{m\}_{q^{n+h+2}}\{n+h+2\}_q {\binom{m}{2}}_q \g^{m-2+j+k}C^{2+h+n} \\
\nonumber & &\vdots \\
\nonumber & & +(-1)^{m-1} q^{\binom{1)}{2}}\{m\}_{q^{n+h+(m-1)}}\{n+h+(m-1)\}_q {\binom{m}{m-1}}_q \g^{1+j+k}C^{m-1+h+n} \\
\nonumber & &+(-1)^m q^{\binom{0}{2}}\{m\}_{q^{n+h+m}}\{n+h+m\}_q {\binom{m}{m}}_q \g^{j+k}C^{m+h+n}.
\end{eqnarray}
Using equations described in \eqref{rela285} to simplify the above expansion would give us
\begin{eqnarray}
\nonumber L_c &=& q^{\binom{m}{2}}\{m\}_{q^{n+h}}\{n+h\}_q {\binom{m}{0}}_q \g^{m+j+k}C^{h+n} \\
\nonumber & & -q^{\binom{m-1}{2}}\{m\}_{q^{n+h+1}} {\binom{m}{1}}_q \left(\psi_1 +\{h+n\}_q \g^{m+j+k} C^{h+n}\right) \\
\nonumber & & +q^{\binom{m-2}{2}}\{m\}_{q^{n+h+2}} {\binom{m}{2}}_q \left(\psi_2 + \psi_1 +\{h+n\}_q \g^{m+j+k} C^{h+n}\right) \\
\nonumber & & \vdots \\
\nonumber & & +(-1)^{m-1} q^{\binom{1)}{2}}\{m\}_{q^{n+h+(m-1)}} {\binom{m}{m-1}}_q \left(\sum_{i=1}^{m-1}\psi_i +\{h+n\}_q \g^{m+j+k} C^{h+n} \right)  \\
\nonumber & &+(-1)^m q^{\binom{0}{2}}\{m\}_{q^{n+h+m}} {\binom{m}{m}}_q \left(\sum_{i=1}^{m}\psi_i +\{h+n\}_q \g^{m+j+k} C^{h+n}\right)
\end{eqnarray}
Hence, 
\vspace{-1em}\begin{eqnarray}
\nonumber L_c &=& \sum_{i=1}^{m}(-1)^iq^{\binom{m-i}{2}}\{m\}_{q^{n+h+i}} \binom{m}{i}\sum_{t=0}^{i}\psi_t \\
\nonumber & & +\sum_{i=0}^{m}(-1)^iq^{\binom{m-i}{2}}\{m\}_{q^{n+h+i}}\{h+n\}_q\binom{m}{i}\g^{m+j+k} C^{h+n}.
\end{eqnarray}
Let $L_7=\sum_{i=0}^{m}(-1)^iq^{\binom{m-i}{2}}\{m\}_{q^{n+h+i}}\binom{m}{i}.$ Then we have 
\begin{eqnarray}
\nonumber L_c &=& \sum_{i=1}^{m}(-1)^iq^{\binom{m-i}{2}}\{m\}_{q^{n+h+i}} \binom{m}{i}\sum_{t=0}^{i}\psi_t +\{h+n\}_q\g^{m+j+k} C^{h+n}L_7.
\end{eqnarray}
Now, observe that 
\vspace{-0.8em}\begin{eqnarray}
\nonumber L_7 &=&\sum_{i=0}^{m}(-1)^iq^{\binom{m-i}{2}}(1+q^{n+h+i}+(q^{n+h+i})^2+\ldots +(q^{n+h+i})^{m-1})\binom{m}{i}, \\
\nonumber &=&\sum_{i=0}^{m}(-1)^iq^{\binom{m-i}{2}}\binom{m}{i}+\sum_{i=0}^{m}(-1)^iq^{\binom{m-i}{2}}q^{n+h+i}\binom{m}{i} \\
\nonumber & &+\sum_{i=0}^{m}(-1)^iq^{\binom{m-i}{2}}(q^{n+h+i})^2\binom{m}{i}\\
\nonumber &\quad& +\ldots +\sum_{i=0}^{m}(-1)^iq^{\binom{m-i}{2}}(q^{n+h+i})^{m-1}\binom{m}{i}.
\end{eqnarray}
So by Proposition~\ref{Fpropo5.11}, we simply have $L_7=0$.
Consequently, 
\vspace{-1em}\begin{eqnarray}
\nonumber L_c &=&\sum_{i=1}^{m}(-1)^iq^{\binom{m-i}{2}}\{m\}_{q^{n+h+i}} \binom{m}{i}\sum_{t=0}^{i}\psi_t 
+ \{h+n\}_q\g^{m+j+k} C^{h+n}(0), \\
\nonumber &=&\sum_{i=1}^{m}(-1)^iq^{\binom{m-i}{2}}\{m\}_{q^{n+h+i}} \binom{m}{i}\sum_{t=0}^{i}\psi_t 
\end{eqnarray}
and we are done.
\end{proof}

Notice the following equations that show the Lie bracket of pairs of vectors taken from our candidate basis as Lie polynomials in $A,B$. These are obtained by using Proposition~\ref{Profin4.2} to perform some reordering, and also by using some relations related to $\nqcombi$ to simplify scalar coefficients. 
\begin{eqnarray}
q^n\left[\g^h B^l C^n,A\right] &=& \{n\}_q\g^{h+1}B^{l-1} C^n\nonumber\\
& &-\{n+l\}_q\g^hB^{l-1} C^{n+1}, \label{Libra1}\\
\left[\g^h B^l C^n,B\right] &=& (q^n-1)\g^h B^{l+1} C^n, \label{Libra2}\\
\left[\g^h B^l C^n,C\right]&=&(1-q^l)\g^h B^l C^{n+1}, \label{Libra3}\\
\left[\g^h C^n A^m,\g^j C^h A^l\right]&=& (q^{mh}-q^{ln})\g^{h+j} C^{n+h}A^{m+l}, \label{Libra4}\\ 
\left[\g^h B^l C^n,\g^j B^m C^h\right]&=& (q^{2mn}- q^{2lh})\g^{h+j} B^{l+m} C^{h+n}, \label{Libra5} \\
\nonumber q^{l(k+n)}\left[\g^h C^n A^m, \g^j B^l C^k\right] &=&q^{ln+km} \g^{h+j} C^{n+k}A^mB^l \\
& &-\g^{h+j} C^{k+n}B^lA^m, \label{Libra6} \\
\nonumber \left[\{n+1\}_q \g^h C^{n+1}-\{n\}_q \g^{h+1} C^n,A\right]&=&(1-q^{n+1} )\{n+1\}_q \g^h C^{n+1} A \\
 & & -(1-q^n)\{n\}_q \g^{h+1} C^n A, \label{Libra7} \\
\nonumber \left[\{n+1\}_q \g^h C^{n+1}-\{n\}_q \g^{h+1} C^n,B\right]&=& (q^{n+1}-1)\{n+1\}_q \g^h BC^{n+1} \\
 & &-(q^n-1)\{n\}_q \g^{h+1} BC^n, \label{Libra8} \\
  \left[\g^h C^n A^m, \{n+1\}_q \g^k C^{n+1}-\{n\}_q \g^{k+1} C^n \right]&=& (1- q^{mn})\{n\}_q\g^{h+k+1} C^{2n} A^m \label{Libra9}\\ & & -(1-q^{m(n+1)})\{n+1\}_q\g^{h+k} C^{2n+1}A^m,\nonumber  \\
 \nonumber \left[\g^h B^l C^n, \{m+1\}_q \g^j C^{m+1}-\{m\}_q \g^{j+1} C^m \right]&=&(1-q^{l(m+1)})\{m+1\}_q \g^{j+h}B^l C^{n+m+1} \\
 & &-(1-q^{lm})\{m\}_q \g^{j+1+h} B^lC^{m+1}. \label{Libra10}
\end{eqnarray}

Clearly, the right-hand sides of \eqref{Libra1} to \eqref{Libra10} are in the Lie subalgebra $\freeLie$ by Proposition~\ref{Propo5.7} and \ref{Propo5.8}.

We now exhibit a basis for the Lie subalgebra $\freeLie$ of $\rq$ generated by $A, \, B$ 

\begin{theorem} The following elements form a basis for $\freeLie$: 
\begin{eqnarray} 
& & \g^h C^n A^m, \g^h B^m C^n, \nonumber\\
& & \{n+1\}_q \g^h C^{n+1}-\{n\}_q \g^{h+1} C^n,\nonumber\\
&  & A, B, C,\quad\quad\quad\quad\quad\quad\quad\quad\quad\quad\quad\quad(h \in \N,\  m,n \in \Z^+.)\label{ref5.52}
\end{eqnarray}
\end{theorem}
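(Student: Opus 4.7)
The plan is to show $V = \freeLie$ (where $V$ denotes the $\F$-span of the listed vectors) together with the linear independence of those vectors. The inclusion $V \subseteq \freeLie$ is immediate from results already established: $A$ and $B$ are generators of $\freeLie$, $C = [A,B] \in \freeLie$, the elements $\g^h C^n A^m$ and $\g^h B^m C^n$ lie in $\freeLie$ by Proposition~\ref{Propo5.7}, and a short computation from $AB = \g + qBA$ together with $BC^n = q^{-n} C^n B$ yields $q^n[\g^h C^n A, B] = \{n+1\}_q \g^h C^{n+1} - \{n\}_q \g^{h+1} C^n$, placing the third family in $\freeLie$ as well.

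For the reverse inclusion, since $\freeLie$ is the smallest Lie subalgebra of $\rq$ containing $A$ and $B$ and these already lie in $V$, it suffices to verify that $V$ is closed under $[\cdot,\cdot]$. By bilinearity, this reduces to checking that the bracket of any two spanning vectors falls back in $V$, and the needed computations are essentially packaged in the excerpt. Equations \eqref{Libra1}--\eqref{Libra3} (and the analogous reorderings with $\g^h C^n A^m$ against $A, B, C$) handle brackets with $A$, $B$, and $C$, with the key observation that the output collapses into the third family exactly when a $B^l$- or $A^m$-power drops to $1$; \eqref{Libra4} and \eqref{Libra5} give closure within each of the first two families; \eqref{Libra6}, combined with Proposition~\ref{Propo5.8}, handles the mixed bracket $[\g^h C^n A^m, \g^j B^l C^k]$ when $m \neq l$; and \eqref{Libra7}--\eqref{Libra10} cover brackets involving the third family. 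The main obstacle is the case $m = l$ of the mixed bracket, since naive reordering yields $\g^a C^b$ terms that individually fail to lie in $V$; Theorem~\ref{theof5.12} shows that the bracket is precisely a linear combination of the differences $\psi_t$, which are exactly the third-family vectors, with the cancellation hinging on the combinatorial identity proved in Proposition~\ref{Fpropo5.11}.

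For linear independence, expand each candidate basis vector in the $\rq$-basis from Section~\ref{sec3}, namely $\{\g^h C^k B^l,\ \g^h C^k A^t : h,k,l \in \N,\ t \in \Z^+\}$, after using $CB = qBC$ to rewrite $\g^h B^m C^n = q^{-mn}\g^h C^n B^m$. Suppose a finite linear combination of the listed vectors equals zero. The contributions involving $A^m$ for $m \geq 1$ come only from the family $\g^h C^n A^m$ (which requires $n \geq 1$) and from $A = \g^0 C^0 A^1$; these involve pairwise distinct $\rq$-basis vectors, so their scalar coefficients vanish. The contributions involving $B^m$ vanish symmetrically. The remaining contributions are pure $\g^h C^k$ terms arising from $C$ and the third family, and a straightforward induction on $h$ (starting with $h = 0$, and using that $q$ is not a root of unity so that $\{n\}_q \neq 0$ for $n \geq 1$) forces all remaining scalars to be zero, completing the proof.
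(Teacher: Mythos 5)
Your argument that the span of \eqref{ref5.52} equals $\freeLie$ is essentially the paper's: both proofs show the span contains $A,B$, is closed under the bracket via the same catalogue of reordering identities \eqref{Libra1}--\eqref{Libra10} (with Theorem~\ref{theof5.12} and the cancellation from Proposition~\ref{Fpropo5.11} handling the delicate mixed case $m=l$, and Proposition~\ref{Propo5.8} the case $m\neq l$), and is contained in $\freeLie$ via Proposition~\ref{Propo5.7} together with $q^n[\g^h C^n A,B]=\{n+1\}_q\g^h C^{n+1}-\{n\}_q\g^{h+1}C^n$. Where you genuinely go beyond the paper is the linear independence step: the paper's proof establishes only that the listed vectors span $\freeLie$ and never verifies that they are independent, whereas you expand everything in the basis \eqref{eq44} (which, minor slip, is the four-generator basis from Section~\ref{sec4}, not Section~\ref{sec3}), observe that the $A$-type and $B$-type contributions land on pairwise distinct basis vectors, and then run the induction on the $\g$-exponent for the pure $\g^h C^k$ part using $\{n\}_q\neq 0$ (valid since $q$ is nonzero and not a root of unity). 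That check is correct, and it is actually needed for the word ``basis,'' so your write-up is the more complete of the two.
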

\begin{proof}
Let $\scK$ be the span of \eqref{ref5.52}. To show that $\scK$ is equal to $\freeLie$, we only have to show the following conditions are satisfied:
\begin{enumerate}[(i)]
    \item $A, B \in \scK$,
    \item $\scK$ is a Lie subalgebra of $\rq$
    \item $\scK \subseteq \freeLie$.
\end{enumerate}
 The condition (i) immediately follows from the definition of $\scK$. For condition (ii), we show that any basis elements $L, R \in$ of $ \scK$, $[L, R]$ is a linear combination of \eqref{ref5.52}. Let $k, l, m, n \in \Z^+$ and $j, k \in \N$.
 Define $\beta_1, \beta_2 \in \scK$ in the following manner:
 \begin{eqnarray}
  \beta_1 &=& \{n+1\}_q \g^j C^{n+1}-\{n\}_q \g^{j+1} C^n, \\ \beta_2 &=& \{m+1\}_q \g^j C^{m+1}-\{m\}_q \g^{j+1} C^m.
 \end{eqnarray}
  We summarize all the cases for $[L, R]$ in the following table.
\begin{center}
\begin{tabular}{ |c||c|c|c|c|c|c| } 
 \hline
 $[ \ ,\ ]$ & $A$ & $B$ & $C$ & $\g^j C^k A^l$ & $\g^j B^l C^k$ &      $\beta_2$  \\ 
 \hline\hline
  $A$ & $0$ &  & &  &  &  \\ 
 \hline 
 $B$ & $-C$ & $0$ & &  &  &  \\ 
 \hline 
 $C$ & $(1-q)CA$ & $(q-1)BC$ & $0$ &  &  &  \\ 
 \hline 
 $\g^k C^n A^m$  & \eqref{Libra11} & \eqref{Libra12} &\eqref{Libra13} & \eqref{Libra4} &  &  \\ 
 \hline 
 $\g^k B^m C^n$ & \eqref{Libra1} & \eqref{Libra2} & \eqref{Libra3} & \eqref{Libra6}, \eqref{close1} & \eqref{Libra5} &  \\ 
 \hline
 $\beta_1$ & \eqref{Libra7} & \eqref{Libra8} & $0$ & \eqref{Libra9} & \eqref{Libra10} & $0$ \\
\hline 
\end{tabular}
\end{center}
 Notice that all possibilities for $L$ are listed in the  first column while all possibilities for $R$ are in the first row. For cell entries which are either $0$ or elements of $\rq$, condition (ii) is clearly satisfied. As for cells with equation numbers as entries, the pertinent equations show how $[L,R]$ is a linear combination of \eqref{ref5.52}. Furthermore, we left empty the  cells above the main diagonal because  of the skew-symmetric property of the Lie bracket. Hence, condition (ii) clearly  follows from the table. 
 
The condition (iii) is clearly satisfied by $A, B, C$.
Also, by Proposition~\ref{Propo5.7}, $\g^h C^n A^m , \, \g^h B^l C^n \in \freeLie$.
Furthermore, $$\{n+1\}_q \g^h C^{n+1}-\{n\}_q \g^{h+1} C^n=q^n [\g^h C^n A,B] \in \freeLie.$$ Thus, we have shown that each element in \eqref{ref5.52} is in the Lie subalgebra $\freeLie$ thereby satisfying condition (iii). 
This completes the proof.
\end{proof}

\subsection*{Acknowledgements}
The authors want to express their gratitude for the support of the first author's colleagues and the second author's mentors from the Mathematics and Statistics Department of the College of Science of De La Salle University, Manila. While this collaboration was in preparation, the first author was supported by the Commission for Developing Countries of the International Mathematical Union (IMU-CDC), and by the Mathematics and Applied Mathematics (MAM) Research Environment of M\"alardalen University, V\"aster\aa s, Sweden, while the second author was supported by the Science Education Institute of the Department of Science and Technology (DOST-SEI) of the Republic of the Philippines.

\section*{References}

\end{document}